\documentclass{article}
\usepackage{amssymb,amsmath,amsthm,amscd,verbatim}
\usepackage{hyperref}
\usepackage{mathrsfs}
\usepackage{graphicx}

\setlength{\parskip}{\smallskipamount}

\newtheorem{thm}{Theorem}[section]
\newtheorem{lem}[thm]{Lemma}
\newtheorem{pro}[thm]{Proposition}

\theoremstyle{definition}
\newtheorem{df}[thm]{Definition}
\theoremstyle{remark}
\newtheorem{rem}{Remark}

\newcommand{\restrict}{\upharpoonright}

\newcommand{\N}{\mathbb{N}}
\renewcommand{\P}{\mathbb{P}}
\newcommand{\inter}{\cap}
\newcommand{\Inter}{\bigcap}
\renewcommand{\to}{\rightarrow}

\newcommand{\eps}{\varepsilon}
\newcommand{\nil}{\varnothing}
\newcommand{\ms}{\mathscr}
\newcommand{\mf}{\mathfrak}
\newcommand{\mc}{\mathcal}
\newcommand{\Union}{\bigcup}
\newcommand{\union}{\cup}
\newcommand{\Iff}{\Leftrightarrow}
\newcommand{\Implies}{\Rightarrow}

\begin{document}
\title{Effective dimension of points visited by Brownian motion}

\author{Bj{\o}rn Kjos-Hanssen\footnote{Department of Mathematics, University of Hawai{\textquoteleft}i at M{\=a}noa, Honolulu, HI 96822; bjoern@math.hawaii.edu. Partially supported as co-PI by NSF grant DMS-0652669.} \\
Anil Nerode\footnote{Department of Mathematics, Cornell University, Ithaca, NY 14853; anil@math.cornell.edu}}

\maketitle

\begin{abstract}
We consider the individual points on a Martin-L\"of random path of Brownian motion. We show (1) that Khintchine's law of the iterated logarithm holds at almost all points; and (2) there exist points (besides the trivial example of the origin) having effective dimension $<1$. The proof of (1) shows that for almost all times $t$, the path $f$ is Martin-L\"of random relative to $t$ and so the effective dimension of $(t,f(t))$ is 2.
Keywords: Brownian motion, algorithmic randomness, effective randomness
\end{abstract}

\section{Introduction}

Algorithmic randomness for Brownian motion was introduced by Asarin and Pokrovskii. They defined what they called, according to the English translation  \cite{AP}, \emph{truly random} continuous functions. Fouch\'e \cite{F} called these functions \emph{complex oscillations}.

In this article we answer a question of Fouch\'e (see \cite{F2}) by showing that for each complex oscillation, Khintchine's law of the iterated logarithm holds at almost every point. To that end, in Section \ref{mrbrown} we borrow a construction from the proof of the Wiener-Carath\'eodory measure algebra isomorphism theorem. For the full statement of this theorem, the reader may consult for example Royden \cite{R}, Theorem 15.3.4; we shall not need it.

We believe our method based on this isomorphism theorem can be used to yield other results than the one presented here. Namely, algorithmic randomness for the unit interval $[0,1]$ has been studied more extensively than algorithmic randomness for the space $C[0,1]$ of continuous functions, and the isomorphism theorem allows a transfer of some results. For a general introduction to algorithmic randomness on $[0,1]$, the reader may consult \cite{LV}.

In algorithmic randomness and in computability theory generally, the Turing oracles considered are usually drawn from the space $2^\N$ of infinite binary sequences. Since all non-computable real numbers have a unique binary expansion, it makes no difference if oracles are drawn from the unit interval $[0,1]$ instead.

\begin{df} Suppose $\Omega$ is a set, $\ms F=\{T_i:i\in\N\}$ a countable Boolean algebra of subsets of $\Omega$, $\mu$ a probability measure on the $\sigma$-algebra generated by $\ms F$. Let $t\in [0,1]$. Suppose $\phi:\N^2\to\N$ is a total function Turing reducible to $t$.
The sequence $U_n=\bigcup_m T_{\phi(n,m)}$, $n\in\N$ is called a $t$-uniform sequence of $\Sigma^t_1(\ms F)$ sets.
A $t$-effective $\ms F$-null set is a set $A\subseteq\Omega$ such that for some such $\phi$,
\begin{enumerate}
\item $A\subset\bigcap_n U_n$, and
\item $\mu U_n$ goes effectively to $0$ as $n\to\infty$. That is, there is a computable function $\psi$ such that whenever $n\ge \psi(k)$, we have $\mu U_n\le 2^{-k}$.
\end{enumerate}
\end{df}

We review the Wiener probability measure $W$ on $\Omega=C[0,1]$. It is such that for $\omega\in \Omega$, and $t_0<t_1<\cdots<t_n$, the values of $\omega(t_0)$ and $\omega(t_{i+1}-t_i)$ are independent random variables.  
Moreover, the probability that $\omega(s+t)-\omega(s)\in A$, where $A$ is some set of reals, is $\int_A (2\pi t)^{-1/2}\exp(-x^2/2t)dx$. This says that $\omega(t)$ is normally distributed with standard deviation $\sqrt{t}$ (variance $t$) and mean $0$. Informally, a sufficiently random member of $\Omega$ with respect to $W$ is called a path of Brownian motion.

The precise definition of complex oscillations is immaterial to the present paper, but we include it for completeness. The idea is to mimic the classical characterization of Brownian motion as a limit of random walks with finer and finer increments (Donsker's Invariance Principle).

\begin{df}\label{co}
For $n\ge 1$, we write $C_n$ for the class of continuous functions on $[0,1]$ that vanish at $0$ and are linear with slope $\pm\sqrt{n}$ on the intervals $[(i-1)/n,i/n]$, $i=1,\ldots,n$.

To every $x\in C_n$ one can associate a binary string in $\{1,-1\}^*$,  $a_1\cdots a_n$, of length $n$ by setting $a_i=1$ or $a_i=-1$ according to whether $x$ increases or decreases on the interval $[(i-1)/n,i/n]$. We call the word $a_1\cdots a_n$ the \emph{code} of $x$ and denote it by $c(x)$.

A sequence $\{x_n\}_{n\in\N}$ in $C[0,1]$ is \emph{complex} if $x_n\in C_n$ for each $n$ and there is some constant $d\in\N$ such that $K(c(x_n))\ge n-d$ for all $n$, where $K$ denotes prefix-free Kolmogorov complexity.

A function $x\in C[0,1]$ is a \emph{complex oscillation} if there is a complex sequence $\{x_n\}_{n\in\N}$ such that $x_n-x$ converges effectively to $0$ as $n\to\infty$, in the uniform norm.
\end{df}

A number $t\in [0,1]$ is a dyadic rational if it is of the form $\frac{p}{2^n}$, for $p,n\in\N$; otherwise, $t$ is called a dyadic irrational. 

In the following, $\text{closure}(G)$ is the closure of $G$, $G^{\text{co}}$ is the complement of $G$, and $O_\eps(G)$ is the open $\eps$-ball around $G$.

\begin{df}[Fouch\'e \cite{F}] \label{egs}
A sequence $\ms F_0=(F_i:i\in\N)$ of Borel subsets of $\Omega$ is a $t$-effective generating sequence if
\begin{enumerate}
\item[(1)] for $F\in\ms F_0$ and $\eps>0$, if 
$$G\in \{\{O_\eps(F), O_\eps(F^{\text{co}}), F, F^{\text{co}}\},$$ 
then $W(\text{closure}(G))=W(G)$;
\item[(2)] there is a $t$-effective procedure that yields, for each sequence $0\le i_1<\cdots<i_n\in\N$ and $k\in\N$, a dyadic rational number $\beta_k$ such that $|W(\Inter_{1\le k\le n} F_{i_k})-\beta_k|<2^{-k}$; and
\item[(3)] for $n,i\in\N$, for rational numbers $\eps>0$ and for $x\in C_n$, both the relations $x\in O_\eps(F_i)$ and $x\in O_\eps(F^{\text{co}}_i)$ are $t$-recursive in $x, \eps, i$ and $n$.
\end{enumerate}

If there exists a $t$ such that $\ms F_0$ is a $t$-effective generating sequence, then $\ms F_0$ is called a \emph{generating sequence}. The algebra it generates is similarly called a \emph{generated algebra}. A \emph{$t$-effectively generated algebra} is the Boolean algebra generated from an $t$-effective generating sequence.
If $F$ is a generated algebra and $\omega$ belongs to no $t$-effective $\ms F$-null set, then we say that $\omega$ is $t$-$\ms F$-random or $\ms F$-random relative to $t$. If $t$ is computable then we may omit mention of $t$.
A set $A\subset C[0,1]$ is of \emph{$t$-constructive measure 0} if, for some $t$-effectively generated algebra $\ms F$, $A$ is a $t$-effective $\ms F$-null set.
\end{df}

\begin{thm}[Fouch\'e \cite{F}; see also \cite{F1}]
No complex oscillation belongs to any set of constructive measure 0.
\end{thm}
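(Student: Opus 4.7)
I will argue by contradiction. Suppose $x$ is a complex oscillation that also belongs to some constructive null set $A$. Let $\{x_m\}_{m\in\N}$ be an associated complex sequence, so $x_m\in C_m$, $K(c(x_m))\ge m-d$, and $x_m\to x$ effectively in uniform norm. Let $\ms F$ be an effectively generated algebra with generating sequence $\ms F_0$ witnessing $A\subseteq \Inter_n U_n$, where $U_n=\Union_j T_{\phi(n,j)}$ and, after reindexing, $W(U_n)\le 2^{-n}$. Property (1) of $\ms F_0$ ensures each basic set is topologically regular modulo a $W$-null set, so the effective convergence $x_m\to x$ together with $x\in U_n$ forces $x_m$ into an effective thickening of $U_n$ once $m$ is sufficiently large relative to $n$.

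The strategy is then to compress $c(x_m)$ below $m-d$ bits. For each $m$ and $n$, consider
\[
E_{m,n}=\{y\in C_m: y\in U_n\},
\]
interpreted via an effective $\eps$-thickening so that membership is decidable on the finite set $C_m$. Property (3) makes $y\in O_\eps(F_i)$ recursive in $y,\eps,i,m$, so membership in each $T_{\phi(n,j)}$ is decidable on $C_m$; hence $E_{m,n}$ is uniformly c.e.\ in $m$ and $n$. Once we fix an enumeration we can describe $c(x_m)$ by its index in $E_{m,n}$ together with $n$ and $m$, at cost $\lceil\log_2 |E_{m,n}|\rceil + O(\log m + \log n)$ bits.

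The key quantitative step is a counting bound: since the uniform probability on $C_m$ is the law of a simple random walk rescaled via Donsker's recipe, and since property (1) guarantees the sets $T_{\phi(n,j)}$ are $W$-continuity sets, I expect an estimate of the form
\[
|E_{m,n}|/2^m \;\le\; W(U_n) + \delta(m,n),
\]
where $\delta(m,n)\to 0$ effectively in $m$ for each fixed $n$. Combining with $W(U_n)\le 2^{-n}$ yields a description of $c(x_m)$ of length at most $m - n + o(m) + O(\log n)$; choosing $n$ large compared to $d$ and $m$ large relative to $n$ produces $K(c(x_m))<m-d$, contradicting the definition of a complex sequence.

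The main obstacle is the effective Donsker-style counting inequality above, uniform enough in $n$ to couple with $\psi$. This is exactly what property (1) was designed to supply, since boundaries of algebra sets have $W$-measure zero and discrete approximations by $C_m$ therefore agree with $W$ in the limit; the technical heart is to make the rate of this agreement effective and to propagate it from the finitely-many generators used to define $U_n$ up to the full $\Sigma^t_1(\ms F)$ set, which is precisely the role played by the Wiener--Carath\'eodory transfer philosophy mentioned in the introduction.
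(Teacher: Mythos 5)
This theorem is not proved in the paper at all: it is imported verbatim from Fouch\'e \cite{F} (building on Asarin and Pokrovskii), so there is no in-paper argument to compare yours against. Your plan does follow the general lines of the published proof --- compress the codes $c(x_m)$ by locating them inside the trace on $C_m$ of a thickened $U_n$ --- but as written it has two genuine gaps. The first is the one you flag yourself: the estimate $|E_{m,n}|/2^m\le W(U_n)+\delta(m,n)$ with an \emph{effective} modulus $\delta$ is not a routine consequence of property (1). That property only says $W(\text{closure}(G))=W(G)$, which yields $W(O_\eps(F))\downarrow W(F)$ with no rate, and property (2) gives effective approximations only for measures of intersections of generators, not of their $\eps$-thickenings; moreover $U_n$ is an infinite union, so you must first truncate to the finitely many $T_{\phi(n,j)}$ actually containing $x$ and then control the discrepancy between the counting measure on $C_m$ and $W$ on an $\eps$-neighborhood of a finite Boolean combination. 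Making all of this effective and uniform is the entire technical content of the Asarin--Pokrovskii/Fouch\'e theorem; it cannot be deferred to the ``Wiener--Carath\'eodory transfer philosophy,'' which in this paper serves a different purpose (moving Martin-L\"of tests between $C[0,1]$ and $[0,1]$) and is applied downstream of the present theorem rather than proving it.

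The second gap is in the complexity bookkeeping, even granting your counting bound. A prefix-free description of $c(x_m)$ by its index in $E_{m,n}$ costs $m-n+O(\log m)+O(\log n)$ bits (you must also specify $m$, or equivalently the length of the index, self-delimitingly); your stated bound $m-n+o(m)+O(\log n)$ is strictly weaker and can never drop below $m-d$ for large $m$, since the $o(m)$ term eventually dominates the fixed gain $n$. Even with the sharper $O(\log m)$ overhead, the contradiction requires $n>d+O(\log m)+O(\log n)$ while simultaneously $m$ must exceed the threshold $M(n)$ at which $\delta(m,n)\le 2^{-n}$ and $\|x_m-x\|$ is small enough for the thickening argument; unless $M(n)$ grows subexponentially in $n$, these two requirements conflict. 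Closing this loop demands quantitative control of the rates that the sketch does not supply, and this is precisely where the real work in \cite{F} lies.
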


Let $LIL(\omega,t)$ be the statement that

$$\limsup_{h\to 0}\frac{|\omega(t+h)-\omega(t)|}{\sqrt{2 |h| \log\log (1/|h|)}}=1.$$

\noindent Thus $LIL(\omega,t)$ says that Khintchine's Law of the Iterated Logarithm holds for $\omega$ at $t$.

\begin{thm}[following Fouch\'e \cite{F2}]\label{platformA}
If $t\in [0,1]$, and $f$ is $t$-$\ms F$-random for each $t$-effectively generated algebra $\ms F$, then the Law of the Iterated Logarithm holds for $f$ at $t$.
\end{thm}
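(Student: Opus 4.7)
The plan is to exhibit, from $t$ alone, a single $t$-effectively generated algebra $\ms F$ such that $\{\omega : \neg LIL(\omega, t)\}$ is a $t$-effective $\ms F$-null set; the hypothesis on $f$ then immediately yields $LIL(f,t)$. I take $\ms F$ to be generated by the countable, effectively indexed family of half-space events
$$F_{r,s,q}^{\pm} = \{\omega : \pm(\omega(t+s) - \omega(t+r)) > q\},$$
where $r < s$ range over dyadic rationals in $[0,1]$ and $q \in \mathbb{Q}_{\ge 0}$. Condition (1) of Definition \ref{egs} holds because the boundary $\{\omega(t+s) - \omega(t+r) = q\}$ is $W$-null, and the $O_\eps$-inflations coincide with $F_{r,s,q\mp 2\eps}^\pm$ up to further $W$-null sets. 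Crucially, condition (2) requires no oracle: by stationarity and independence of Brownian increments, the joint probabilities of finite intersections are Gaussian integrals whose covariance matrix depends only on the dyadic rationals $r_i, s_i$ appearing (not on $t$), and are uniformly approximable by dyadic rationals. Condition (3) holds because for piecewise linear $x \in C_m$ the values $x(t+r), x(t+s)$ are $t$-computable in $x, r, s$.

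For the upper half of LIL, fix a rational $\eps > 0$, take $h_n = 2^{-n}$, set $c_n^\eps = (1+\eps)\sqrt{2 h_n \log\log(1/h_n)}$, and consider
$$A_n^\eps = F_{0, h_n, c_n^\eps}^+ \cup F_{0, h_n, c_n^\eps}^-,$$
augmented with analogous sub-dyadic events on subintervals $[j 2^{-n-j'}, (j+1) 2^{-n-j'}]$ of $[h_{n+1}, h_n]$ to control the supremum modulus there. Mill's-ratio yields an effectively summable majorant $W(A_n^\eps) = O(n^{-(1+\eps)^2})$, so the ``infinitely often'' event is a $t$-effective $\ms F$-null set by effective Borel--Cantelli; intersecting over rational $\eps$ gives $\limsup \le 1$.

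For the lower half, given rational $\eps > 0$, pick a dyadic common ratio $\rho \in (0,1)$ small enough that $(1-\eps)^2 < 1 - \rho$, set $h_n = \rho^n$, and encode the disjoint independent increments $Y_n = \omega(t+h_n) - \omega(t+h_{n+1})$ by $B_n^\eps = F_{h_{n+1}, h_n, d_n^\eps}^+ \cup F_{h_{n+1}, h_n, d_n^\eps}^-$ with $d_n^\eps = (1-\eps)\sqrt{2 h_n \log\log(1/h_n)}$. A Gaussian lower tail gives an effective lower bound on $W(B_n^\eps)$ making the series divergent, and by independence the partial products $\prod_{n=N}^M (1 - W(B_n^\eps))$ decay to $0$ at a computable rate in $M$. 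Hence $\{\omega : \text{only finitely many } B_n^\eps \text{ occur}\}$ is covered by
$$U_k = \bigcup_N \bigcap_{n=N}^{N+M(N,k)}(B_n^\eps)^c,$$
where $M(N,k)$ is chosen so that $W(U_k) \le 2^{-k}$, again a $t$-effective $\ms F$-null set. Combined with the upper bound applied at scale $h_{n+1}$, this forces $\limsup \ge 1 - 2\eps$; letting $\eps \downarrow 0$ concludes.

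The main technical obstacle is fitting all three families of events (fixed-scale upper-bound, sub-dyadic modulus-of-continuity, and variable-scale disjoint-increment) into one $t$-effective generating sequence, while routing every probability computation through translation invariance of Brownian increments so that no non-computable oracle is invoked to evaluate measures. Once this bookkeeping is in place, the two halves of LIL become effective analogues of Khintchine's classical argument.
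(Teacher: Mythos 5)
Your overall strategy --- package the increments of $\omega$ around $t$ into a single $t$-effectively generated algebra and show that the failure of $LIL(\cdot,t)$ is a $t$-effective null set for that algebra --- is exactly what the paper means by ``a straightforward relativization to $t$ of Fouch\'e's argument''; the paper supplies no further detail, so you are filling in the intended proof rather than diverging from it. Your observation that condition (2) of Definition \ref{egs} needs no oracle, because the joint laws of the increments $\omega(t+s)-\omega(t+r)$ depend only on the dyadic parameters $r,s$ and not on $t$, is the key point that makes the relativization go through, and it is correct.

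There is, however, one concrete quantitative gap in the upper half. With the geometric scale $h_n=2^{-n}$, controlling $\sup_{h_{n+1}\le h\le h_n}|\omega(t+h)-\omega(t)|$ by $(1+\eps)\sqrt{2h_n\log\log(1/h_n)}$ and then renormalizing at $h\ge h_{n+1}$ costs a factor $\sqrt{h_n/h_{n+1}}=\sqrt{2}$, so the argument as written only yields $\limsup\le\sqrt{2}(1+\eps)$, not $\le 1$. The sub-dyadic events you add cannot repair this: the modulus of continuity of Brownian motion over an interval of length $h_n/2$ is of order $\sqrt{h_n\log(1/h_n)}$, which dominates the LIL normalizer, so no ``modulus'' correction at that scale is $o(\sqrt{h_n\log\log(1/h_n)})$. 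The standard fix is to take $h_n=\theta^n$ for rational $\theta<1$, obtain $\limsup\le(1+\eps)/\sqrt{\theta}$ via the reflection principle (the event $\{\max_{0\le s\le h_n}\pm(\omega(t+s)-\omega(t))>c\}$ is, by path continuity, a countable union of your generators over dyadic $s$, with computable measure $2\P\{B_{h_n}>c\}$), and then intersect the resulting $t$-effective null sets over rational $\theta\uparrow 1$ and $\eps\downarrow 0$. A smaller point: condition (3) of Definition \ref{egs} asks that $x\in O_\eps(F_i)$ be $t$-recursive for $x\in C_n$, and comparing the $t$-computable real $x(t+s)-x(t+r)$ with the rational threshold $q-2\eps$ is in general only $t$-c.e.; this is the same boundary-case technicality Fouch\'e's framework already has to absorb, but it should not be asserted without comment. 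The lower half (independent increments at scales $\rho^n$ with $\rho$ small, effective second Borel--Cantelli via the computable decay of the partial products) is correct as sketched.
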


The proof is a straightforward relativization to $t$ of Fouch\'e's argument (which covers the case where $t$ is computable).

\section{Isomorphism Theorem}\label{mrbrown}

Let $\ms A_0$ be a generating sequence.
 Write $\ms A_0=\{A_n\}_{n\in\N}$.

\begin{itemize}
\item Let $\mathfrak A_n$ be the Boolean algebra generated by $\{A_1,\ldots,A_n\}$.
\item Let $\ms A=\mathfrak A_\infty=\bigcup_n\mathfrak A_n$, the Boolean algebra generated by $\ms A_0$.
\item Let $\mf I$ be the Boolean algebra of finite unions of half-open intervals $[a,b)$ in $[0,1)$.
\end{itemize}

A \emph{Boolean measure algebra homomorphism} is a map that preserves measure, unions, and complements.

 \begin{thm}[Wiener, Carath\'eodory]\label{XYZ}
There is a Boolean measure algebra homomorphism $\Phi:\ms A\to\mathfrak I$.
\end{thm}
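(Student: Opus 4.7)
The plan is to build $\Phi$ level by level, inductively defining a sequence of partitions of $[0,1)$ into half-open intervals whose lengths mirror the $W$-measures of the atoms of the finite Boolean algebras $\mathfrak A_n$.

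First I would enumerate, at each stage $n$, the finitely many atoms $B_1^{(n)},\ldots,B_{k_n}^{(n)}$ of $\mathfrak A_n$; each is an intersection of the form $\bigcap_{i\le n} A_i^{\eps_i}$ with $\eps_i\in\{+,-\}$, where $A^+=A$ and $A^-=A^{\text{co}}$. To each atom I would assign a half-open interval $I_j^{(n)}\subset [0,1)$ of length $W(B_j^{(n)})$, with the $I_j^{(n)}$ pairwise disjoint and together exhausting $[0,1)$.

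The key inductive requirement is a refinement compatibility: since every $B_j^{(n)}$ is the disjoint union of the atoms of $\mathfrak A_{n+1}$ that it contains, I require the corresponding subintervals at stage $n+1$ to partition $I_j^{(n)}$. This is arranged by ordering the atoms of $\mathfrak A_{n+1}$ lexicographically in the sign sequence $\eps_1,\ldots,\eps_{n+1}$, so that all sub-atoms of a common atom of $\mathfrak A_n$ are contiguous, and then assigning them consecutive subintervals of $I_j^{(n)}$ with the prescribed lengths. Finite additivity of $W$ guarantees that the sub-lengths sum to the length of $I_j^{(n)}$; atoms of $W$-measure zero are assigned degenerate empty intervals and cause no difficulty.

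Having built the nested partitions, I would define $\Phi$ on the atoms of each $\mathfrak A_n$ by $\Phi(B_j^{(n)})=I_j^{(n)}$ and extend by sending an arbitrary element of $\mathfrak A_n$ (a disjoint union of atoms) to the corresponding finite disjoint union of intervals, which belongs to $\mathfrak I$. The refinement compatibility shows that $\Phi$ is well-defined on $\ms A=\bigcup_n \mathfrak A_n$, independent of the level $n$ at which an element is represented. Measure preservation follows from $|I_j^{(n)}|=W(B_j^{(n)})$ by additivity; preservation of unions and complements inside each $\mathfrak A_n$ reduces to the observation that the atom map is a measure-preserving bijection onto a partition of $[0,1)$ of the same total measure.

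The main obstacle is the bookkeeping of the refinement step: one must fix the ordering of atoms once and for all so that the partitions at successive stages nest correctly, and treat $W$-measure-zero atoms without breaking the adjacency that makes the nesting work. Once the lexicographic ordering is in place, every other verification is a routine consequence of the finite additivity of $W$ and of the fact that each $\mathfrak A_n$ is a finite Boolean algebra.
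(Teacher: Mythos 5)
Your proposal is correct and follows essentially the same route as the paper: an inductive refinement of a partition of $[0,1)$ into half-open intervals matching the $W$-measures of the atoms of $\mathfrak A_n$, with sub-atoms assigned consecutive subintervals of the parent interval (your lexicographic ordering is exactly the paper's choice of putting $\Phi(A_n\inter B_j)$ as the left subinterval and $\Phi(A^{\text{co}}_n\inter B_j)$ as the right). The handling of measure-zero atoms as degenerate intervals also matches the paper's remark that some sets may map to $[x_j,x_j)=\emptyset$.
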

  \begin{proof}
In this proof we will denote Wiener measure $W$ by $\mu$. We first consider the case $n=1$. Since $\mf A_1=\{\emptyset, A_1, A^{\text{co}}_1, \Omega\}$ and $\mu A_1+\mu A^{\text{co}}_1=\mu\Omega=1$, we let $\Phi(A_1)=[0,\mu A_1)$, $\Phi(A^{\text{co}}_1)=[\mu A_1,1)$, $\Phi(\emptyset)=\emptyset$, and $\Phi(\Omega)=[0,1)$. Then $\Phi$ is clearly a Boolean measure algebra homomorphism from $\mf A_1$ into $\mf I$.

Suppose now that $\Phi$ has been defined on $\mf A_{n-1}$ so that it is a Boolean measure algebra homomorphism from $\mf A_{n-1}$ onto the algebra generated by $k\in\N$ many half open intervals $[x_0,x_1), [x_1,x_2),\ldots,[x_{k-1},x_k)$, where $x_0=0$ and $x_k=1$.

We wish to extend the mapping $\Phi$ to $\mf A_n$. Let $B_i$ be the set in $\mf A_{n-1}$ which is mapped onto the interval $[x_j,x_{j+1})$, for $j<k$. Then $\mf A_{n-1}$ consists of all finite unions of the sets $B_j$, $j<k$, and $\mf A_n$ consists of all finite unions from the $2k$ sets $A_n\inter B_j$, $A^{\text{co}}_n\inter B_j$, $j<k$. Let

 $$\Phi(A_n\inter B_j)=[x_j,x_j+\mu(A_n\inter B_j))$$
 $$\Phi(A^{\text{co}}_n\inter B_j)=[x_j+\mu(A_n\inter B_j),x_{j+1})$$

\noindent This might define $\Phi$ of some sets to be of the form $[x_j,x_j)=\emptyset$.

Clearly $\Phi$ as so defined preserves Lebesgue measure on $[0,1)$. Moreover 
$\Phi(A_n\inter B_j)\cup \Phi(A^{\text{co}}_n\inter B_j)=[x_j,x_{j+1})=\Phi(B_j)$, and $\mu(A_n\inter B_j)+\mu(A^{\text{co}}_n\inter B_j)=\mu(B_j)=x_{j+1}-x_j$. From this it follows that we can extend $\Phi$ to all of $\mf A_n$ so that it is a Boolean measure algebra homomorphism. Since $\mf A_\infty=\bigcup_n\mf A_n$, we have thus defined $\Phi$ on all of $\mf A_\infty$.
 \end{proof}

\begin{rem}\label{referee} The function $\Phi$ is effective in the following sense: if $\ms F=\{T_k:k\in\N\}$ is a $t$-effectively generated algebra, then the measure of $\Phi(T_k)$ can be computed $t$-effectively, uniformly in $k$.
\end{rem}

\begin{lem}\label{245}
Suppose $\mc I_n=(a_n,b_n)$, $n\in\N$, is a sequence of open intervals with $(a_{n+1},b_{n+1})\subseteq (a_n,b_n)$. Suppose $\bigcap_n (a_n,b_n)=\emptyset$. Then either $\{a_n\}_{n\in\N}$ or $\{b_n\}_{n\in\N}$ is an eventually constant sequence.
\end{lem}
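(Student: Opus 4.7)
The plan is to exploit the monotonicity forced by the nesting hypothesis, extract the common limit, and then show this limit must be attained by one of the endpoint sequences. First I would record that the inclusion $(a_{n+1},b_{n+1})\subseteq(a_n,b_n)$, together with the intervals being nonempty, implies $a_n\le a_{n+1}$ and $b_{n+1}\le b_n$. Since $\{a_n\}$ is nondecreasing and bounded above (by any $b_m$) and $\{b_n\}$ is nonincreasing and bounded below, the limits $a^{*}=\lim_n a_n$ and $b^{*}=\lim_n b_n$ exist and satisfy $a_n\le a^{*}\le b^{*}\le b_n$ for every $n$.

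Next I would show $a^{*}=b^{*}$. If instead $a^{*}<b^{*}$, pick any $x$ with $a^{*}<x<b^{*}$; then $a_n\le a^{*}<x<b^{*}\le b_n$ for all $n$, so $x\in\bigcap_n(a_n,b_n)$, contradicting the assumption that the intersection is empty. Call the common value $c$. The crux is then the following trichotomy: for each $n$, either $a_n=c$, or $b_n=c$, or both inequalities $a_n<c<b_n$ are strict. If the strict inequalities held for every $n$, then $c$ itself would belong to $\bigcap_n(a_n,b_n)$, another contradiction. Hence there is some $N$ with $a_N=c$ or $b_N=c$. In the first case, monotonicity of $\{a_n\}$ combined with $a_n\le c$ forces $a_n=c$ for all $n\ge N$; in the second, similarly $b_n=c$ for all $n\ge N$.

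The main obstacle, and really the only subtle point, is the passage from ``$c$ never strictly interior'' to ``one specific sequence is eventually constant.'' The argument above handles it cleanly because once $a_N$ or $b_N$ hits $c$, monotonicity pins the subsequent values. A minor bookkeeping issue is the degenerate possibility that some $(a_n,b_n)$ is already empty, in which case the hypothesis $(a_{n+1},b_{n+1})\subseteq(a_n,b_n)$ gives no direct information about the endpoints; but then one can replace the tail by any witness of emptiness (for instance, take $a_n=b_n=c$ from that index on) without affecting either the hypothesis or the conclusion, so the statement holds trivially in that case.
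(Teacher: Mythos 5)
Your argument is correct and supplies exactly the routine proof the paper omits: the nesting makes $\{a_n\}$ nondecreasing and $\{b_n\}$ nonincreasing, both converge to a common value $c$ (else the intersection is nonempty), $c$ must equal some $a_N$ or $b_N$ (else $c$ lies in every interval), and monotonicity together with $a_n\le c\le b_n$ then freezes that endpoint sequence from $N$ onward. The one quibble is your treatment of the degenerate case: if empty ``intervals'' with $a_n\ge b_n$ are admitted, the lemma as literally stated is false (take $a_n=n$, $b_n=-n$), and replacing the tail by other sequences says nothing about the original $\{a_n\}$, $\{b_n\}$; the right resolution is simply that $a_n<b_n$ is implicit in calling $(a_n,b_n)$ an open interval, which is all that is needed where the lemma is applied in Lemma~\ref{banane}.
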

The proof is routine. The set of Martin-L\"of real numbers in $[0,1]$ is denoted $\text{RAND}$, and relativized to $t$, $\text{RAND}^t$.

An effectively generated algebra $\ms F=\{T_k:k\in\N\}$ is \emph{non-atomic} if for any $b:\N\to \{0,1\}$, we have $W(\Inter_k T^{b(k)}_k)=0$, where $T_k^1:=T_k$ and $T_k^0:=T^{\text{co}}_k$.

\begin{lem}\label{banane}
Let $t\in [0,1]$ and let $\ms F=\{T_k:k\in\N\}$ be a non-atomic, $t$-effectively generated algebra. Let a function $\varphi$ from $C[0,1]$ to $[0,1]$ be defined by: $\varphi(\omega)=$ the unique member of $\cap\{\Phi(T_k):\omega\in T_k\}$, if it exists. \begin{enumerate}
\item[(1)] The domain of $\varphi$ includes all $t$-$\ms F$-randoms.

\item[(2)] If $\varphi(\omega)$ is defined then for each $k$,
$$\omega\in T_k\iff \varphi(\omega)\in\Phi(T_k).$$
\end{enumerate}
\end{lem}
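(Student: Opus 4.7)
The plan is to work with the decreasing chain of atoms $B_n(\omega) = \bigcap_{k \le n} T_k^{b(k)}$ of $\mf A_n$ containing $\omega$ and the nested chain of half-open intervals $\Phi(B_n(\omega)) = [c_n(\omega), d_n(\omega))$ produced by the inductive construction in Theorem \ref{XYZ}. First I will observe that $\bigcap\{\Phi(T_k):\omega\in T_k\}$ equals $\bigcap_n [c_n, d_n)$: each $T_k$ lies in some $\mf A_{n(k)}$ and hence contains $B_{n(k)}(\omega)$ whenever $\omega \in T_k$, while each $B_n(\omega)$ is itself a member of $\ms F$ containing $\omega$. Non-atomicity gives $d_n - c_n = \mu(B_n(\omega)) \to 0$, so $c := \lim c_n = \lim d_n$ and the intersection $\bigcap_n [c_n, d_n)$ is either the singleton $\{c\}$ or empty; the half-open nature of the intervals makes it empty exactly when $d_n$ is eventually equal to $c$ (the asymmetric case isolated by Lemma \ref{245}).

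For part (1), the key geometric fact from the proof of Theorem \ref{XYZ} is that passing from $\mf A_{n-1}$ to $\mf A_n$ splits each atom $B_{n-1}(\omega)$ by placing its $A_n$-part on the left and its $A_n^{\text{co}}$-part on the right (where $\{A_n\}$ is the generating sequence of $\ms F$), so the right endpoint is preserved precisely when $\omega \in A_n^{\text{co}}$. Thus $\varphi(\omega)$ is undefined iff $\omega \in \bigcup_N \bigcap_{n>N} A_n^{\text{co}}$. For each fixed $N$, $\bigcap_{n>N} A_n^{\text{co}}$ is a union of the $2^N$ infinite atoms $\bigcap_k T_k^{b(k)}$ with $b(n)=0$ for $n>N$, each of measure $0$ by non-atomicity, so $\mu(\bigcap_{N<n\le M} A_n^{\text{co}}) \to 0$ by continuity of measure. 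Using Remark \ref{referee}, I will $t$-effectively search, for each $N$ and $k$, for an $M(N,k)$ at which an approximation to $\mu(\bigcap_{N<n\le M(N,k)} A_n^{\text{co}})$ drops below $2^{-N-k-1}$, and set $U_k = \bigcup_N \bigcap_{N<n\le M(N,k)} A_n^{\text{co}}$; this is a $t$-uniform sequence of $\Sigma^t_1(\ms F)$ sets of measure $\le 2^{-k}$ covering the set where $\varphi$ is undefined, so no $t$-$\ms F$-random $\omega$ lies there.

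For part (2), given $T_k$ I choose $m$ with $T_k \in \mf A_m$ and write $T_k$ as a disjoint union of atoms of $\mf A_m$, so that $\Phi(T_k)$ is the corresponding disjoint union of half-open intervals $\Phi(B_i^m)$. Then $\omega \in T_k$ iff $B_m(\omega)$ is one of these atoms, while $\varphi(\omega) \in \Phi(T_k)$ iff $\varphi(\omega)$ lies in one of the intervals $\Phi(B_i^m)$. Since $\varphi(\omega) \in \Phi(B_m(\omega))$ by definition and distinct atoms of $\mf A_m$ have disjoint $\Phi$-images, these two conditions coincide.

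The main obstacle is the effectivity claim in part (1): one must check that the $t$-effective measure approximations afforded by the $t$-effective generating sequence really do let us locate a suitable $M(N,k)$ from $N$ and $k$, using only the qualitative information that the true measure tends to $0$ with $M$ (no uniform rate of convergence is needed, so a bounded search comparing approximations to $2^{-N-k-1}$ terminates).
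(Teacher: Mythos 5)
Your part (2) and your reduction of $\bigcap\{\Phi(T_k):\omega\in T_k\}$ to the nested atom intervals $[c_n,d_n)$ are correct and consistent with the paper. For part (1) you take a genuinely different route: the paper argues that if the intersection is empty then, by Lemma \ref{245} and Remark \ref{referee}, one of the endpoint sequences is eventually constant at a $t$-computable value, from which one can $t$-effectively determine membership of $\omega$ in each $T_k$ and conclude that $\omega$ is not $t$-$\ms F$-random; you instead try to cover the complement of the domain of $\varphi$ directly by a $t$-effective $\ms F$-null set. That strategy is reasonable, but your execution has a gap.

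The gap is the claimed identity ``$\varphi(\omega)$ is undefined iff $\omega\in\bigcup_N\bigcap_{n>N}A_n^{\text{co}}$.'' In the construction of Theorem \ref{XYZ}, the right endpoint $x_{j+1}$ of $\Phi(B_{n-1}(\omega))$ is preserved not only when $\omega\in A_n^{\text{co}}$, but also when $\omega\in A_n\inter B_{n-1}(\omega)$ and $\mu(A_n^{\text{co}}\inter B_{n-1}(\omega))=0$, since then $\Phi(A_n\inter B_{n-1}(\omega))=[x_j,x_{j+1})$. Null atoms really do occur in the paper's intended application (e.g.\ $\{B_{1/2}<0\}\inter\{B_{1/2}<1\}^{\text{co}}=\nil$ when both events are generators), so the undefined set can be strictly larger than $\bigcup_N\bigcap_{n>N}A_n^{\text{co}}$, and your cover $U_k$ can miss undefined points, for which non-randomness is then not established. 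Repairing this along your lines means covering, within each atom of $\mf A_N$, the union $S_M$ of those atoms of $\mf A_M$ whose interval has the same right endpoint as the parent; one still has $\mu(S_M)\to 0$, but identifying $S_M$ exactly requires deciding equality of $t$-computable reals, which is not $t$-decidable. One must instead effectively produce a small-measure superset of $S_M$, e.g.\ the union of all atoms to the right of a split point that Lemma \ref{less} verifies to be both strictly less than $x_{j+1}$ and within the target measure of it. The obstacle you flag at the end (locating $M(N,k)$ by unbounded search) is the easy part; the issue above is the one that needs attention.
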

\begin{proof}
(1): Suppose $\omega$ is not in the domain of $\varphi$. That is,  $S=\cap\{\Phi(T_k):\omega\in T_k\}$ does not have a unique element. It is clear that $S$ is an interval. Since $\ms F$ is non-atomic, this interval must have measure zero. Thus, since $S$ does not have exactly one element, $S$ must be empty.

By Remark \ref{referee} and Lemma \ref{245}, there is a $t$-computable point $a$ or $b$ such that $a_n\to a$ or $b_n\to b$, where $(a_n,b_n)=\inter_{k\le n}\Phi(T_k)$. Using this point $a$ or $b$ one can $t$-effectively determine whether $\omega\in T_k$, given any $k\in \N$. Thus $\omega$ is not $t$-$\ms F$-random.

(2)$\to$: By definition of $\varphi$.

(2)$\leftarrow$: Since $\{T_k\}_{k\in\N}$ is a Boolean algebra and so closed under complements, $$\omega\not\in T_k\to \omega\in T_k^{\text{co}}=T_{\ell}\to \varphi(\omega)\in\Phi(T_{\ell})=\Phi(T_k)^{\text{co}}.$$

\end{proof}

 \subsection{Effectiveness Lemmas}

A \emph{presentation} of a real number $a$ is a sequence of open intervals $I_n$ with rational endpoints, containing $a$, such that $I_n$ has diameter $\le 2^{-n}$.

\begin{lem}\label{less}
There is a Turing machine which, given a presentation of $a=a_0\oplus a_1$ as oracle, terminates iff $a_0<a_1$.
\end{lem}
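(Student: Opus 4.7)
The plan is to reduce the comparison to one performed on presentations of $a_0$ and $a_1$ separately. From a presentation of the join $a_0\oplus a_1$ one can compute, uniformly in $n$, a rational open interval $(p_n,q_n)\ni a_0$ and a rational open interval $(r_n,s_n)\ni a_1$, each of diameter at most $2^{-n}$. The Turing machine $M$ I would build simply enumerates $n=0,1,2,\ldots$ and halts as soon as it locates an $n$ with $q_n<r_n$. Since rational comparison is decidable, this is a legitimate algorithm.

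For correctness, first suppose $a_0<a_1$ and set $\delta:=a_1-a_0>0$. Pick any $n$ with $2^{-n+1}<\delta$. Since $a_0\in(p_n,q_n)$ and $q_n-p_n\le 2^{-n}$, we have $q_n<a_0+2^{-n}$; symmetrically $r_n>a_1-2^{-n}$. Hence $r_n-q_n>\delta-2^{-n+1}>0$, and $M$ halts no later than stage $n$. Conversely, if $a_0\ge a_1$, then at every stage $q_n>a_0\ge a_1>r_n$, so the halting condition never fires and $M$ runs forever. In particular the case $a_0=a_1$ also gives non-halting, as required.

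The only non-routine ingredient is the uniform extraction of presentations of the two summands from a presentation of the join, which is a standard effectiveness fact depending only on the fixed convention chosen for $\oplus$. I do not anticipate a genuine obstacle here: strict inequality of distinct reals is eventually witnessed by sufficiently fine rational approximations, whereas equality or the reversed inequality can never be falsely witnessed by such approximations, giving the desired semi-decidability of $a_0<a_1$.
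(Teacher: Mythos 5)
Your proof is correct and follows essentially the same route as the paper's: both semi-decide $a_0<a_1$ by searching for rational approximating intervals around $a_0$ and $a_1$ that are witnessed to lie in the right order, halting exactly when such a witness appears. The only cosmetic difference is that the paper searches over two independent indices $n,m$ while you use a single index, and you additionally spell out the (routine) extraction of the two component presentations from the presentation of the join, which the paper takes for granted.
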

\begin{proof}
Let $I_n$, $J_n$, be open intervals containing $a_0$, $a_1$, respectively, as in the definition of ``presentation''. Search for $n$, $m$ such that $I_n\inter J_m=\nil$, and the right endpoint of $I_n$ is $<$ the left endpoint of $J_m$. Such $n$, $m$ will be found if and only if $a_0<a_1$.
\end{proof}

\noindent On the other hand, it is well known that if $a_0=a_1$ then no algorithm will be able to verify this in general.
For intervals $(a,b)$, $(c,d)$, we say $(a,b)$ is \emph{bi-properly} contained in $(c,d)$ if $c<a\le b<d$.

\begin{rem}
The set of dyadic irrationals can be identified with a full-measure subset of $2^\N$ via the map $\iota$ such that $\iota(\sum_{i\ge 1} b_i 2^{-i})=\{b_i\}_{i\ge 1}$. This also gives an identification of cones $[\sigma]=\{A\in 2^\N: \forall n<|\sigma|\,\, A(n)=\sigma(n)\}$ for $\sigma\in \{0,1\}^*$ with intervals in the dyadic irrationals. Formally, we can let $\iota(2^\N)=[0,1]$ and if $\iota([\sigma])=[a,b]$ then $\iota([\sigma 0])=[a,a+(b-a)/2]$ and $\iota([\sigma 1]=[a+(b-a)/2,b]$. \end{rem}

\begin{lem}\label{makesyousmarter}
The set of pairs $\sigma\in\{0,1\}^*$, $k\in\N$ such that $\Phi(T_k)$ is bi-properly contained in $\iota([\sigma])$, is computably enumerable.
\end{lem}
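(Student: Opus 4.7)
The plan is to observe that $\Phi(T_k)$ is not only computably measured (per Remark \ref{referee}) but in fact $t$-effectively presentable as a finite union of half-open intervals with $t$-computable endpoints, and then to verify bi-proper containment by two applications of Lemma \ref{less}, one at each end.

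First I would unpack the inductive definition of $\Phi$ given in the proof of Theorem \ref{XYZ}. At stage $n$, every endpoint $x_j$ in the interval decomposition of $\Phi(\mf A_n)$ is a finite sum of numbers of the form $\mu(A_n\inter B_j)$, and unwinding the recursion shows that each such endpoint is in fact a finite sum of measures of Boolean combinations of the generators $A_1,\ldots,A_n$. By clause (2) of Definition \ref{egs}, these measures are uniformly $t$-computable, hence so are the $x_j$. Given an index for $T_k$ as a Boolean combination of generators, I would locate an $n$ with $T_k\in\mf A_n$, identify the atoms $B_j\subseteq T_k$, and thereby represent $\Phi(T_k)$ as a finite disjoint union of consecutive half-open intervals $[y_0,y_1)\cup\cdots\cup[y_{m-1},y_m)$ with the $y_i$ uniformly $t$-computable in $k$.

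Next, write $\iota([\sigma])=[c,d)$ with $c,d$ dyadic rationals. Bi-proper containment of $\Phi(T_k)$ in $\iota([\sigma])$ is exactly the conjunction $c<y_0$ and $y_m<d$. Each such strict inequality between a rational and a $t$-computably presented real is a $\Sigma^0_1(t)$ event by (the relativization of) Lemma \ref{less}. Dovetailing over all pairs $(\sigma,k)$ and enumerating $(\sigma,k)$ as soon as both witnesses have appeared produces the desired enumeration.

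The main obstacle will be justifying that Remark \ref{referee}, though phrased only for the total measure of $\Phi(T_k)$, actually provides $t$-computable presentations of the individual endpoints of $\Phi(T_k)$ rather than just of its length. Once that strengthening is in hand, the rest of the argument is a routine search built from Lemma \ref{less} and dovetailing.
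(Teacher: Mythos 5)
Your proposal is correct and follows the same route as the paper: the paper's proof simply observes that the endpoints of $\Phi(T_k)$ and $\iota([\sigma])$ have computable presentations and then invokes Lemma \ref{less}. You have merely filled in the details the paper leaves implicit, namely that unwinding the recursion in Theorem \ref{XYZ} together with Definition \ref{egs}(2) yields presentations of the individual endpoints, and that bi-proper containment reduces to two applications of Lemma \ref{less} plus dovetailing.
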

\begin{proof} The endpoints of $\Phi(T_k)$ and $\iota([\sigma])$ have computable presentations. Thus the result follows from Lemma \ref{less}. \end{proof}

\begin{lem}\label{ML}
Let $t\in [0,1]$ and let $\ms F=\{T_k:k\in\N\}$ be a $t$-effectively generated algebra.
\begin{enumerate}
\item[(1)] If $\ms F$ is non-atomic, then for each $t$-ML-test $\{U_n\}_{n\in\N}$ there is a $t$-computable function $f:\N^2\to\N$ such that
$$U_n\cap\text{RAND}=\Union_m \Phi(T_{f(n,m)})\cap\text{RAND}.$$
\item[(2)] If for a $t$-computable function $f:\N^2\to\N$, we have $U_n=\Union_m \Phi(T_{f(n,m)})$ then $U_n$ has a subset $U_n'$ such that $U_n\cap\text{RAND}=U_n'\cap\text{RAND}$ and $\{U_n'\}_{n\in\N}$ is uniformly $\Sigma^0_1(t)$.
\end{enumerate}
\end{lem}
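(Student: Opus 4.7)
My plan is to handle the two parts as complementary effective-enumeration arguments. For (1), I will build $f$ layer by layer: for each $n$, enumerate (in $t$) the rational open intervals $(p, q)$ that comprise $U_n$; inside each such $(p, q)$ enumerate all dyadic cones $\iota([\sigma]) \subseteq (p, q)$; and for each such $\iota([\sigma])$, by Lemma \ref{makesyousmarter} relativized to $t$, enumerate those $k$ with $\Phi(T_k)$ bi-properly contained in $\iota([\sigma])$. This gives the $t$-computable $f$, and the containment $\bigcup_m \Phi(T_{f(n, m)}) \subseteq U_n$ is built in. For the other direction on RAND, given an ML-random $x \in U_n$, I pick a rational $(p, q) \subseteq U_n$ with $x \in (p, q)$ and then a dyadic cone $\iota([\sigma]) \ni x$ bi-properly inside $(p, q)$, which is possible because an ML-random $x$ is not a dyadic rational. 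It then suffices to produce a $T_k$ with $x \in \Phi(T_k)$ bi-properly inside $\iota([\sigma])$. Here I invoke non-atomicity: the $\Phi$-images of the atoms of the subalgebra $\la T_1, \ldots, T_N \ra$ partition $[0, 1)$, and by the non-atomic hypothesis every infinite branch of the binary tree of atoms (as $N$ grows) carries measure tending to $0$. A K\"onig compactness argument then upgrades this branchwise decay to the uniform statement: for every $\eps > 0$ there is an $N$ such that every atom at level $N$ has $\Phi$-image length $< \eps$. Choosing $\eps < \min(x - a_\sigma, b_\sigma - x)$ and taking the atom at level $N$ containing $x$ (which, since $\ms F$ is closed under Boolean operations, is itself some $T_k$) yields $\Phi(T_k)$ bi-properly inside $\iota([\sigma])$ with $x \in \Phi(T_k)$.

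For (2), the plan is dual: let $U_n'$ be the union of all rational open intervals $(p, q)$ bi-properly contained in some $\Phi(T_{f(n, m)})$. By Remark \ref{referee}, the endpoints of each $\Phi(T_{f(n, m)})$ have $t$-computable presentations, so by Lemma \ref{less} the relation ``$(p, q)$ bi-properly contained in $\Phi(T_{f(n, m)})$'' is $t$-semi-decidable uniformly in $(n, m, p, q)$, making $\{U_n'\}$ uniformly $\Sigma^0_1(t)$, with $U_n' \subseteq U_n$ automatic. For the converse on RAND, given an ML-random $x \in U_n$, I choose $m$ with $x \in \Phi(T_{f(n, m)})$ and a half-open component $[a_i, b_i) \subseteq \Phi(T_{f(n, m)})$ containing $x$; because the boundary points $a_i$ are $t$-computable and hence avoided by the randoms intended in the application (certainly by members of $\text{RAND}^t$), density of the rationals supplies a $(p, q)$ with $a_i < p < x < q < b_i$, whence $x \in (p, q) \subseteq U_n'$.

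The main obstacle is the K\"onig argument in (1): promoting the branchwise measure decay guaranteed by non-atomicity to a uniform bound on all atoms at a single depth $N$. Without this step one would only know that atoms eventually shrink along individual branches, which is not enough to conclude that the specific atom containing $x$ fits bi-properly inside $\iota([\sigma])$. The remaining details reduce to bookkeeping with bi-proper containment, using Remark \ref{referee} and Lemma \ref{less} to ensure effectivity.
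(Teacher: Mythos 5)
Your proposal is correct and follows essentially the same route as the paper: in (1) you enumerate cones inside $U_n$ and, via Lemma \ref{makesyousmarter}, the $\Phi(T_k)$ bi-properly contained in them, and in (2) your union of rational intervals bi-properly inside the $\Phi(T_{f(n,m)})$ is exactly the paper's $\bigcup_m \text{interior}(\Phi(T_{f(n,m)}))$. The one place you go beyond the paper is welcome: the K\"onig/compactness step upgrading non-atomicity to a uniform bound on the lengths of the $\Phi$-images of level-$N$ atoms is exactly the justification the paper compresses into ``since $\ms F$ is non-atomic, we will gradually enumerate all of $[\sigma]$,'' and your observation that the interval endpoints are only $t$-computable (so are avoided by $\text{RAND}^t$, which is what the application in Theorem \ref{yep} needs) is, if anything, more careful than the paper's phrasing.
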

\begin{proof}
(1): We can enumerate the cones $[\sigma]$ contained in $U_n$. Once we see some $[\sigma]$ get enumerated and then see (using Lemma \ref{makesyousmarter}) that some $\Phi(T_k)$ is bi-properly contained in $[\sigma]$, we can enumerate $\Phi(T_k)$. Since $\ms F$ is non-atomic, we will gradually enumerate all of $[\sigma]$ except for possibly one or more of its endpoints. These endpoints are computable by Definition \ref{egs}(2).

(2): Let $\text{interior}(C)$ denote the interior of a set $C\subseteq [0,1]$. We let $$U_n=\bigcup_m \text{interior}(\Phi(T_{f(n,m)})).$$ Thus $U_n$ and $\bigcup_m\Phi(T_{f(n,m)})$ agree except on the left endpoints of the half-open intervals $\Phi(T_{f(n,m)})$. Since these endpoints are all computable numbers, we are done.

\end{proof}

The only result of this section that will be used in the next is the following:

\begin{thm}\label{yep}
Let $\omega\in\Omega$, $t\in [0,1]$, and let $\ms F_0=\{T_k:k\in\N\}$ be a non-atomic $t$-effective generating sequence, and $\ms F$ its generated algebra. The following are equivalent: \begin{enumerate}\item[(1)] $\omega$ is $t$-$\ms F$-random; \item[(2)] $\varphi(\omega)\in\text{RAND}^t$. \end{enumerate}
\end{thm}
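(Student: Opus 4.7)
The plan is to prove each direction by contraposition, using Lemma~\ref{banane} to swap membership of $\omega$ in $T_k$ for membership of $\varphi(\omega)$ in the half-open interval $\Phi(T_k)$, and Lemma~\ref{ML} to translate between uniform sequences of $\ms F$-sets and $t$-Martin-L\"of tests on $[0,1]$. The measure-preserving property of $\Phi$ ensures effective measure bounds transfer in both directions. The recurring technical nuisance is that each $\Phi(T_k)$ is half-open, so a set-theoretic union $\bigcup_m \Phi(T_{f(n,m)})$ can differ from a genuinely open $\Sigma^0_1(t)$ set on a countable collection of $t$-computable endpoints, and these exceptional points have to be accounted for.

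\noindent\emph{Direction} (1)$\Rightarrow$(2). Assume $\omega$ is $t$-$\ms F$-random, so by Lemma~\ref{banane}(1) the value $\varphi(\omega)$ is defined. For the contrapositive, suppose $\varphi(\omega)\in\bigcap_n V_n$ for some $t$-ML-test $\{V_n\}$. Apply Lemma~\ref{ML}(1), relativized to $t$, to obtain a $t$-computable $f$ with $V_n\cap\text{RAND}^t=\bigcup_m\Phi(T_{f(n,m)})\cap\text{RAND}^t$. One first checks that $\varphi(\omega)$ cannot be $t$-computable: if it were, Lemma~\ref{banane}(2) would let us $t$-compute the bits $b(k)=\mathbf{1}[\omega\in T_k]$, and Definition~\ref{egs}(2) together with non-atomicity would let us pick indices $k_n$ effectively so that $W(\bigcap_{k\le k_n} T_k^{b(k)})\le 2^{-n}$, yielding a $t$-effective $\ms F$-null set containing $\omega$. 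Hence $\varphi(\omega)$ avoids the countably many computable endpoints, so $\varphi(\omega)\in\bigcup_m\Phi(T_{f(n,m)})$ for all $n$. By Lemma~\ref{banane}(2), $\omega\in U_n:=\bigcup_m T_{f(n,m)}$, and since $\Phi$ preserves measure, $W(U_n)\le 2^{-n}$ effectively, contradicting $t$-$\ms F$-randomness of $\omega$.

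\noindent\emph{Direction} (2)$\Rightarrow$(1). Assume $\varphi(\omega)\in\text{RAND}^t$. For the contrapositive, suppose $\omega\in\bigcap_n U_n$ with $U_n=\bigcup_m T_{\phi(n,m)}$ forming a $t$-effective $\ms F$-null set. By Lemma~\ref{banane}(2), $\varphi(\omega)\in\bigcup_m\Phi(T_{\phi(n,m)})$ for every $n$. Apply Lemma~\ref{ML}(2) to $\phi$ to extract open $\Sigma^0_1(t)$ sets $V_n$ which differ from $\bigcup_m\Phi(T_{\phi(n,m)})$ only on computable endpoints and satisfy $\lambda(V_n)\le W(U_n)\to 0$ effectively. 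Since $\varphi(\omega)\in\text{RAND}^t$ excludes it from being any computable endpoint, $\varphi(\omega)\in V_n$ for every $n$, so $\{V_n\}$ is a $t$-ML-test covering $\varphi(\omega)$, contradicting $\varphi(\omega)\in\text{RAND}^t$.

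I expect the main obstacle to be the endpoint bookkeeping in direction (1)$\Rightarrow$(2), and in particular the sublemma that a $t$-$\ms F$-random $\omega$ cannot be sent by $\varphi$ to a $t$-computable real. This is the only step where one must construct a $t$-effective $\ms F$-null set explicitly rather than quote a previous lemma, and it is what justifies ignoring the countable exceptional set on which the half-open union $\bigcup_m\Phi(T_{f(n,m)})$ fails to coincide with the genuinely open test $V_n$.
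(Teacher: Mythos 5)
Your proof is correct and follows essentially the same route as the paper's: both directions transfer tests through $\Phi$ via Lemma~\ref{banane} and Lemma~\ref{ML}, using measure preservation from Theorem~\ref{XYZ} for the effective bounds. The extra bookkeeping you do for the half-open endpoints (in particular the sublemma that a $t$-$\ms F$-random $\omega$ cannot have $t$-computable $\varphi(\omega)$) is care the paper's own proof silently omits; it is in the spirit of the argument for Lemma~\ref{banane}(1) and is a reasonable, if tersely justified, addition rather than a divergence.
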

\begin{proof} Let $\mu$ denote Lebesgue measure on $[0,1]$. 

\noindent (2) implies (1): Suppose $\omega$ is not $t$-$\ms F$-random, so $\omega\in\Inter_n V_n$, a $t$-$\ms F$-null set. Then $V_n=\Union_m T_{f(n,m)}$ for some $t$-computable $f$.

Let $U_n=\Union_m \Phi(T_{f(n,m)})$. Note

\noindent (a) $U_n$ is uniformly $\Sigma^0_1(t)$ by Lemma \ref{ML}(2).

\noindent (b) Since $\Phi$ is measure preserving on $\ms F$ and is a Boolean algebra homomorphism by Theorem \ref{XYZ},
$$\mu(\cup_{i=1}^n \Phi(T_{a_i}))=\mu(\Phi(\cup_{i=1}^n T_{a_i}))=W(\cup_{i=1}^n T_{a_i})$$
Since the measure of a countable union is the limit of the measures of finite unions, $\mu U_n=W(V_n)\le 2^{-n}$.

\noindent By (a) and (b), $\{U_n\}_{n\in\N}$ is a $t$-ML-test.

If $\omega$ is not in the domain of $\varphi$ then $\omega$ is not $\ms F$-random, by Lemma \ref{banane}(1); so we may assume $\varphi(\omega)$ exists. Hence, since $\omega\in\Inter_n V_n$, by definition of $\varphi$, we have $\varphi(\omega)\in\Inter_n U_n$.  Thus $\varphi(\omega)\not\in\text{RAND}^t$.

\noindent (1) implies (2): Suppose $\varphi(\omega)$ is not 1-$t$-random, so $\varphi(\omega)\in\Inter_n U_n$, for some $t$-Martin-L\"of test $\{U_n\}_{n\in\N}$. Let  $V_n:=\union_m T_{f(n,m)}$ with $f$ as in Lemma \ref{ML}(1). So by its definition, $V_n$ is uniformly $\Sigma^t_1(\ms F_0)$.
As in the proof that (2) implies (1), $\mu(U_n)=W(V_n)$. Since $\varphi(\omega)\in\Inter_n U_n$, by Lemma \ref{banane}(2) we have $\omega\in\Inter_n V_n$.
\end{proof}

\section{Khintchine's Law for Complex Oscillations}

It is common in probability theory to write, for $\omega\in\Omega$ and $x\in [0,1]$, $B_x(\omega)=\omega(x)$. This allows us to refer to the set $\{\omega\in\Omega: \omega(x)<y\}$, for example (where $x$, $y$ are fixed rational numbers) as the event that $B_x<y$, and as a set this is written $\{B_x<y\}$. In words, the value of the Brownian motion at time $x$ is less than $y$.

Let, for each $t\in [0,1]$, $\ms F_t$ be an $t$-effectively generated algebra that contains the one used in Theorem \ref{platformA}, and that moreover is non-atomic. The latter is achieved by including all events of the form $\{B_x<y\}$ for rational $x\in [0,1]$ and arbitrary rational $y$. Note that if $\ms F$ and $\ms F'$ are effectively generated algebras, and $\ms F\subseteq\ms F'$, then each $\ms F'$-random function $\omega\in\Omega$ is also $\ms F$-random, since adding elements to an effective generating sequence only adds new effective null sets.

\begin{lem}\label{schwimmbad}
For each $t\in [0,1]$ and each $\omega$ with $\varphi(\omega)\in\text{RAND}^t$, we have $LIL(\omega,t)$. In particular, for each $t\in\text{RAND}$ and each $\omega$ with $\varphi(\omega)\in\text{RAND}^t$, we have $LIL(\omega,t)$.
\end{lem}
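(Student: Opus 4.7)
The plan is to chain together Theorem \ref{yep} and Theorem \ref{platformA}, with the paragraph preceding the lemma providing the bridge between the two.

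First I would observe that, by construction, $\ms F_t$ is a non-atomic, $t$-effectively generated algebra. Hence the hypotheses of Theorem \ref{yep} are met (with $\ms F_0$ a generating sequence for $\ms F_t$), and it delivers the equivalence
$$\varphi(\omega)\in \text{RAND}^t \iff \omega \text{ is } t\text{-}\ms F_t\text{-random}.$$
So from the hypothesis $\varphi(\omega)\in\text{RAND}^t$ I extract that $\omega$ is $t$-$\ms F_t$-random.

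Next I would invoke the monotonicity observation made in the paragraph preceding the lemma: if $\ms F\subseteq \ms F'$ are both $t$-effectively generated algebras, then any $t$-$\ms F'$-random function is also $t$-$\ms F$-random (an $\ms F$-null set is an $\ms F'$-null set, since adding generators only adds possible tests). Because $\ms F_t$ was specifically arranged to contain the algebra used in Theorem \ref{platformA}, the $t$-$\ms F_t$-randomness of $\omega$ transfers to randomness with respect to that smaller algebra. Theorem \ref{platformA} then concludes $LIL(\omega,t)$.

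The ``in particular'' clause is the specialization to $t\in\text{RAND}$, which is a subcase of $t\in[0,1]$ and calls for no further argument; it is stated to foreshadow the two-dimensional randomness application sketched in the abstract. I do not expect any serious obstacle here: the substantive work has already been absorbed into Theorem \ref{yep} (the transfer via the Wiener--Carath\'eodory isomorphism) and Theorem \ref{platformA} (Fouch\'e's relativized LIL argument). The one point worth double-checking is that the stipulated $\ms F_t$ truly can be non-atomic, $t$-effectively generated, and large enough to contain Fouch\'e's algebra simultaneously; this is secured by padding with all events $\{B_x<y\}$ for rational $x\in[0,1]$ and rational $y$, whose dense family of split-points drives $W(\bigcap_k T_k^{b(k)})=0$ for every $b\in\{0,1\}^{\mathbb N}$.
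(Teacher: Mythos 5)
Your proposal is correct and follows essentially the same route as the paper: apply Theorem \ref{yep} to convert $\varphi(\omega)\in\text{RAND}^t$ into $t$-$\ms F_t$-randomness, then feed this into Theorem \ref{platformA} via the containment of Fouch\'e's algebra in $\ms F_t$. You merely make explicit the monotonicity step and the non-atomicity check that the paper leaves to the preceding paragraph.
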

\begin{proof}
Suppose $t\in [0,1]$ and $\varphi(\omega)\in\text{RAND}^t$. By Theorem \ref{yep}, $\omega$ belongs to no $t$-effective $\ms F_t$-null set. Hence by Theorem \ref{platformA}, $LIL(\omega,t)$.
\end{proof}

The point now is that in the image of $\varphi$, we already know more of what is going on. Let $A\oplus B=\{2n:n\in A\}\cup\{2n+1:n\in B\}$, for reals $A$, $B$ (equivalently, $A, B\subseteq \N$).

\begin{thm}[van Lambalgen's Theorem]\label{vl}
Let $A$, $B$ be reals. The following are equivalent.
\begin{itemize}
\item $A\in\text{RAND}$ and $B\in\text{RAND}^A$;
\item $A\oplus B\in\text{RAND}$;
\item $B\in\text{RAND}$ and $A\in\text{RAND}^B$.
\end{itemize}
\end{thm}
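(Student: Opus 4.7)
The third condition is symmetric to the first under swapping $A$ and $B$ (together with the computable permutation of $\N$ that interchanges the two coordinates of the $\oplus$ encoding), so the plan is to prove the equivalence of the first two conditions and let symmetry handle the rest.

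For the forward direction, I would argue contrapositively. Suppose $A\oplus B\in\bigcap_n U_n$ for some Martin-L\"of test $\{U_n\}$. For each oracle $X$ let $U_n^X=\{Y: X\oplus Y\in U_n\}$; this is uniformly $\Sigma^0_1(X)$, and Fubini gives $\int \mu(U_n^X)\,dX\le 2^{-n}$. The set $V_n=\{X: \mu(U_n^X)>2^{-n/2}\}$ is $\Sigma^0_1$ (enumerate $X$ once the approximated measure of $U_n^X$ exceeds $2^{-n/2}$) and has measure at most $2^{-n/2}$ by Markov's inequality, so $\{V_{2n}\}_n$ is a Martin-L\"of test. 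Either $A$ lies in its intersection, contradicting $A\in\text{RAND}$, or for some $N$ we have $\mu(U_n^A)\le 2^{-n/2}$ for all $n\ge N$, in which case a suitable subsequence of $\{U_n^A\}$ is an $A$-Martin-L\"of test containing $B$, contradicting $B\in\text{RAND}^A$.

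The converse splits into two parts. That $A\in\text{RAND}$ is immediate: any Martin-L\"of test $\{V_n\}$ for $A$ lifts to the test $W_n=\{X\oplus Y: X\in V_n\}$ of the same measure, which contains $A\oplus B$. The harder part is $B\in\text{RAND}^A$, and I expect this to be the main obstacle. Given a candidate $A$-Martin-L\"of test $\{U_n^A\}_n$ containing $B$, the naive lift $\{X\oplus Y: Y\in U_n^X\}$ need not be a Martin-L\"of test, because the measure of $U_n^X$ could overflow for exceptional $X$. The key trick is to \emph{truncate} the operator: enumerate basic clopen subsets into $\hat U_n^X$ only so long as the running measure of $\hat U_n^X$ stays below $2^{-n}$. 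Then $X\mapsto \hat U_n^X$ is uniformly $\Sigma^0_1(X)$ with $\mu(\hat U_n^X)\le 2^{-n}$ for every $X$, and crucially $\hat U_n^A=U_n^A$ because the original $A$-test already respects this bound at $X=A$. Hence $W_n=\{X\oplus Y: Y\in\hat U_n^X\}$ is $\Sigma^0_1$ with $\mu(W_n)\le 2^{-n}$ by Fubini, and $A\oplus B\in W_n$ for every $n$, contradicting $A\oplus B\in\text{RAND}$.
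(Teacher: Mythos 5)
The paper does not actually prove this statement: it is van Lambalgen's classical theorem, quoted as a known result and used as a black box (only the implication from the second bullet to the first is invoked, in Lemma \ref{soon}). So there is no in-paper argument to compare against. What you have written is essentially the standard textbook proof, and most of it is sound: the lifting of a test for $A$ to a test for $A\oplus B$, the truncation trick that makes the operator $X\mapsto \hat U_n^X$ a test uniformly in \emph{every} oracle while leaving $U_n^A$ untouched, and the symmetry reduction for the third bullet are all exactly right.

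There is, however, one genuine gap, in the direction from the first bullet to the second. You define $V_n=\{X:\mu(U_n^X)>2^{-n/2}\}$ and assert that either $A\in\bigcap_n V_{2n}$ or else $\mu(U_n^A)\le 2^{-n/2}$ for all sufficiently large $n$. That dichotomy is false as stated: failing the Martin-L\"of test $\{V_{2n}\}_n$ only yields a \emph{single} $n$ with $A\notin V_{2n}$, whereas your conclusion needs $A$ to avoid cofinitely many of the $V_n$. The standard repairs are (i) to note that $\sum_n\mu(V_n)<\infty$, so $\{V_n\}_n$ is a Solovay test, and a Martin-L\"of random real belongs to only finitely many members of a Solovay test; or (ii) to run your argument with the tail unions $V_k'=\bigcup_{n\ge k}V_n$, whose measures are $O(2^{-k/2})$, since $A\notin V_{2k}'$ for some $k$ really does give $\mu(U_n^A)\le 2^{-n/2}$ for all $n\ge 2k$. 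With either fix, the rest of your argument (re-indexing a tail of $\{U_n^A\}$ into an $A$-Martin-L\"of test capturing $B$) goes through, and the proof is complete.
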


We can now approach our desired result:

\begin{lem}\label{soon}
If $\varphi(\omega)\in\text{RAND}$ and $t\in\text{RAND}^{\varphi(\omega)}$ then $LIL(\omega,t)$.
\end{lem}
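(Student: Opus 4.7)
The plan is essentially a one-line application of van Lambalgen's theorem to swap the roles of $\varphi(\omega)$ and $t$, and then reduce to Lemma \ref{schwimmbad}.

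More explicitly: the hypothesis is $\varphi(\omega) \in \text{RAND}$ and $t \in \text{RAND}^{\varphi(\omega)}$. This is exactly the first clause of the van Lambalgen equivalence (Theorem \ref{vl}), applied with $A = \varphi(\omega)$ and $B = t$. The third clause then gives $t \in \text{RAND}$ and $\varphi(\omega) \in \text{RAND}^{t}$. Having extracted $\varphi(\omega) \in \text{RAND}^{t}$, I would directly invoke Lemma \ref{schwimmbad}, which says precisely that $\varphi(\omega) \in \text{RAND}^{t}$ implies $LIL(\omega,t)$. That completes the argument.

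There is no real obstacle here; the only subtle point is bookkeeping about which object plays the role of the oracle. Lemma \ref{schwimmbad} takes $\varphi(\omega)$ random relative to $t$ and concludes the law of the iterated logarithm at $t$; the assumption in Lemma \ref{soon} instead gives $t$ random relative to $\varphi(\omega)$. Van Lambalgen's theorem is exactly the tool for converting one into the other, so the proof reduces to invoking Theorem \ref{vl} to produce the hypothesis needed by Lemma \ref{schwimmbad}.
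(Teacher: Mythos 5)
Your proof is correct and is essentially identical to the paper's: both apply van Lambalgen's theorem with $A=\varphi(\omega)$, $B=t$ to obtain $\varphi(\omega)\in\text{RAND}^t$, and then invoke Lemma \ref{schwimmbad}.
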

\begin{proof}
Suppose $\varphi(\omega)\in\text{RAND}$ and $t\in\text{RAND}^{\varphi(\omega)}$. By Theorem \ref{vl} with $A=\varphi(\omega)$ and $B=t$, we have that $t\in\text{RAND}$ and $\varphi(\omega)\in\text{RAND}^t$. Hence by Lemma \ref{schwimmbad}, we have $LIL(\omega,t)$.
\end{proof}

\begin{thm}
If $\omega$ is a complex oscillation, then for almost all $t$, $LIL(\omega, t)$.
\end{thm}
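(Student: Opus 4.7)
The plan is to combine Fouché's theorem on complex oscillations with Theorem \ref{yep}, van Lambalgen's theorem (implicit in Lemma \ref{soon}), and the standard fact that almost every real is Martin-L\"of random relative to a fixed oracle.

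First, fix a non-atomic, computably generated algebra $\ms F$ (for instance $\ms F_0$ with $t=0$, augmented by the events $\{B_x<y\}$ for rational $x, y$ to ensure non-atomicity). The associated map $\varphi$ from Lemma \ref{banane} is then defined on every $\ms F$-random $\omega$. Since $\omega$ is a complex oscillation, Fouché's theorem (quoted in the excerpt) tells us that $\omega$ belongs to no set of constructive measure $0$; in particular $\omega$ is $\ms F$-random. Hence Theorem \ref{yep} applies and gives $\varphi(\omega)\in\text{RAND}$.

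Next, I invoke the standard fact that for any fixed real $A$, the set $\{t\in[0,1]:t\notin\text{RAND}^A\}$ has Lebesgue measure $0$ (the union of the $A$-effective open sets in an $A$-Martin-L\"of test witnessing non-randomness is a measure-$0$ set, and there are only countably many tests). Applying this with $A=\varphi(\omega)$, we see that for almost every $t\in[0,1]$ we have $t\in\text{RAND}^{\varphi(\omega)}$.

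Finally, for every such $t$ we have both $\varphi(\omega)\in\text{RAND}$ and $t\in\text{RAND}^{\varphi(\omega)}$, so Lemma \ref{soon} yields $LIL(\omega,t)$. This is a short argument once all the machinery is in place; the only mildly delicate point is making sure the chosen algebra $\ms F$ is simultaneously non-atomic (so that $\varphi$ and Theorem \ref{yep} are available) and contains the generators needed to invoke Theorem \ref{platformA} inside Lemma \ref{schwimmbad}. The preamble to Lemma \ref{schwimmbad} has already arranged exactly this, so no further work is required and the theorem follows in a few lines.
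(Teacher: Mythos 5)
Your proof is correct and follows essentially the same route as the paper's: establish $\varphi(\omega)\in\text{RAND}$ via Theorem \ref{yep}, observe that $\text{RAND}^{\varphi(\omega)}$ has Lebesgue measure $1$, and conclude with Lemma \ref{soon}. The only difference is that you spell out the intermediate step (complex oscillation $\Rightarrow$ $\ms F$-random, via Fouch\'e's theorem) that the paper leaves implicit when it invokes Theorem \ref{yep}, which is a point in your favor rather than a divergence.
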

\begin{proof}
Suppose $\omega$ is a complex oscillation. By Theorem \ref{yep}, $\varphi(\omega)\in\text{RAND}$. By Lemma \ref{soon}, $LIL(\omega, t)$ holds for each $t\in\text{RAND}^{\varphi(\omega)}$. Since $\text{RAND}^A$ has measure 1 for each $A\in 2^\N$, we are done.
\end{proof}

\noindent We remark that our main result can be extended from Martin-L\"of randomness to Schnorr randomness. To prove this one would use a weak version of van Lambalgen's theorem that holds for Schnorr randomness; see Merkle et al \cite{MMNRS}) or Yu \cite{Y}.

\section{Points of dimension $<1$}

We now show that almost surely, there are points of effective Hausdorff dimension $<1$ on the graph of 1-dimensional Brownian motion, other than the trivial example $(0,0)$. The question whether this is so was raised by J. S. Miller (personal communication) in connection with a more general question: Does there exists a continuous planar curve all of whose points have effective dimension exactly $1$? S. Lempp and J. Lutz have announced a proof that such a curve could not be a straight line. Although our result deals with a notion of effectivity, it is also a question about almost sure behavior and in that sense classical probability theory.

From now on we will denote the Wiener probability measure $W$ by $\P$, to facilitate probabilistic thinking. The sample path of 1-dimensional Brownian motion has value $B_t=B_t(\omega)\in\mathbb R$ at time $t\in [0,\infty)$, where $\omega$ is a randomly chosen member of the sample space of continuous functions $C(\mathbb R)$.

For $E\subseteq [0,\infty)$, let $B[E]=\{B_t: t\in E\}$ be the range of $B_t$ on $E$. The following two results are well-known, see for example \cite{Durrett}.

\begin{thm}[Blumenthal's 0-1 Law]
If $C$ is a property of Brownian motion that only depends on the germ at time $t=0$ of the Brownian path (that is, only depends on values for t arbitrarily close to 0) then $\P(C)\in\{0,1\}$.
\end{thm}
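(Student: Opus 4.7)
The plan is to interpret ``depends only on the germ at $t=0$'' formally as: the event $C$ is measurable with respect to the germ $\sigma$-algebra $\mc F_0^+ := \Inter_{t > 0} \mc F_t$, where $\mc F_t := \sigma(B_s : 0 \le s \le t)$. I would then prove every $A \in \mc F_0^+$ is independent of itself, so that $\P(A) = \P(A \inter A) = \P(A)^2$, which forces $\P(A) \in \{0,1\}$.

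The core step is to show that $A$ is independent of $\sigma(B_{t_1}, \ldots, B_{t_n})$ for every finite collection of times $0 < t_1 < \cdots < t_n$. Fix such times and pick any $0 < h < t_1$. Since $A \in \mc F_0^+ \subseteq \mc F_h$, and since by the independent-increments property of Brownian motion the random vector $(B_{t_1} - B_h, \ldots, B_{t_n} - B_h)$ is independent of $\mc F_h$, we obtain independence of $A$ from that vector. Now I would let $h \downarrow 0$: by almost-sure continuity of sample paths, $B_{t_i} - B_h \to B_{t_i}$ a.s., so applying dominated convergence to $\mathbb E[\mathbf 1_A\, f(B_{t_1} - B_h, \ldots, B_{t_n} - B_h)]$ for every bounded continuous $f$ yields independence of $A$ from $(B_{t_1}, \ldots, B_{t_n})$.

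With independence of $A$ from every finite-dimensional marginal in hand, a routine $\pi$-$\lambda$ (Dynkin class) argument extends this to independence of $A$ from $\mc F_\infty := \sigma(B_s : s \ge 0)$. Since $\mc F_0^+ \subseteq \mc F_\infty$, the event $A$ is independent of itself, and the conclusion follows. The only real subtlety is the limit passage $h \downarrow 0$, which is clean given path continuity; the independent-increments structure of Brownian motion supplies the rest. This is a classical argument and the authors are merely citing it from Durrett; the nontrivial content of the section lies in the subsequent use of it rather than in its proof.
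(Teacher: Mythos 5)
Your argument is the standard textbook proof of Blumenthal's 0--1 law and it is correct: interpret the hypothesis as measurability of $C$ with respect to the germ $\sigma$-algebra $\mc F_0^+=\Inter_{t>0}\mc F_t$; use independence of $(B_{t_1}-B_h,\ldots,B_{t_n}-B_h)$ from $\mc F_h$ for $0<h<t_1$; pass to the limit $h\downarrow 0$ via path continuity (and $B_0=0$ a.s.) against bounded continuous test functions; extend by a $\pi$-$\lambda$ argument to $\mc F_\infty$; and conclude $\P(C)=\P(C)^2$. The paper itself gives no proof of this statement --- it is listed as well known with a citation to Durrett --- so there is nothing internal to compare against; your write-up simply supplies the classical argument that the citation points to, and the only interpretive step is your (correct, intended) formalization of ``depends only on the germ at $t=0$'' as $\mc F_0^+$-measurability.
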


\begin{pro}\label{5}
$\frac{1}{\sqrt{a}}B_{at}$ and $B_t$ are identically distributed for $a>0$.
\end{pro}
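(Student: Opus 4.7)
The plan is to verify that the rescaled process $X_t := \frac{1}{\sqrt{a}} B_{at}$ satisfies the defining properties of standard Brownian motion, and then invoke the fact that the law of Brownian motion on $C[0,\infty)$ is uniquely determined by these properties. Since both $X_t$ and $B_t$ are continuous Gaussian processes, it actually suffices to match the mean and covariance functions; but checking the full list of Brownian motion axioms is just as quick and makes the argument self-contained.

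First I would observe that $X_0 = \frac{1}{\sqrt{a}} B_0 = 0$ almost surely, and that $t \mapsto X_t$ is continuous because $t \mapsto B_{at}$ is a continuous reparametrization of a continuous path scaled by a nonzero constant. Next, for independence of increments: given $0 = t_0 < t_1 < \cdots < t_n$, the differences
$$X_{t_i} - X_{t_{i-1}} = \frac{1}{\sqrt{a}}\bigl(B_{at_i} - B_{at_{i-1}}\bigr)$$
are independent because the corresponding times $0 = at_0 < at_1 < \cdots < at_n$ partition an interval on which the original Brownian motion has independent increments, and deterministic scaling preserves independence.

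Finally, for the distribution of a single increment: by Proposition definition of Wiener measure recalled in the introduction, $B_{at} - B_{as} \sim N(0, a(t - s))$ for $s < t$. Multiplying a Gaussian by $1/\sqrt{a}$ multiplies the variance by $1/a$, so
$$X_t - X_s = \frac{1}{\sqrt{a}}(B_{at} - B_{as}) \sim N\!\bigl(0,\, t - s\bigr).$$
Thus $X$ is a continuous process with stationary, independent, mean-zero Gaussian increments of variance equal to the length of the time interval, starting at $0$, i.e.\ a standard Brownian motion. Hence $(X_t)_{t \ge 0}$ and $(B_t)_{t \ge 0}$ have the same law, and in particular agree in one-dimensional marginals.

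There is no substantive obstacle here; the only point that requires a hint of care is noticing that to get identical distributions of the \emph{process}, one needs to verify independence of increments rather than merely checking the marginal variance of $X_t$, since many processes share marginals with Brownian motion without being Brownian.
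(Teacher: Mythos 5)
Your argument is correct: verifying that $X_t=\frac{1}{\sqrt{a}}B_{at}$ starts at $0$, has continuous paths, and has independent Gaussian increments of variance $t-s$ is the standard proof of Brownian scaling invariance, and your remark that one needs equality of laws at the level of the process (not just one-dimensional marginals) is exactly the right level of care, since the proposition is applied in Lemma \ref{bb} to an event involving a whole range of times. The paper itself supplies no proof, citing the result as well known from Durrett, so there is nothing to compare against; your write-up is precisely what such a reference contains.
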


\begin{thm}\label{0}[from Theorem 16.5 of \cite{Kahane}]
Let $E_1$ and $E_2$ be disjoint closed subsets of $[0,\infty)$. If
$\dim (E_1\times E_2)>1/2$, then $\P\{ B[E_1]\inter B[E_2]\ne\nil \}>0$.
\end{thm}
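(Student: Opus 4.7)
My plan is to apply the second-moment (``energy'') method, reducing the theorem to one technical Gaussian determinant estimate. First, by intersecting both $E_1$ and $E_2$ with a sufficiently large compact interval (using countable stability of Hausdorff dimension and $\sigma$-compactness of $[0,\infty)$), I may assume $E_1,E_2$ are compact with separation $\delta:=d(E_1,E_2)>0$. Then by Frostman's lemma applied to $E_1\times E_2\subseteq\mathbb R^2$, there exist $\alpha>1/2$ and a Borel probability measure $\sigma$ supported on $E_1\times E_2$ whose planar $\alpha$-energy
\[
I_\alpha(\sigma)=\iint \frac{d\sigma(s,t)\,d\sigma(s',t')}{\bigl((s-s')^2+(t-t')^2\bigr)^{\alpha/2}}
\]
is finite, hence so is the $1/2$-energy (by boundedness of the support).

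Next, introduce the mollified functional
\[
J_\eps := \iint p_\eps(B_s-B_t)\,d\sigma(s,t),\qquad p_\eps(x)=\frac{1}{\sqrt{2\pi\eps}}e^{-x^2/(2\eps)}.
\]
Since $B_s-B_t\sim N(0,|s-t|)$, the first moment is $\mathbb E[J_\eps]=\iint (2\pi(\eps+|s-t|))^{-1/2}\,d\sigma$, bounded below by a positive constant $c_1$ uniformly in small $\eps$ because $|s-t|\le D:=\operatorname{diam}(E_1\cup E_2)$ on the compact support. The crucial quantity is the second moment
\[
\mathbb E[J_\eps^2]=\iiiint \frac{d\sigma(s,t)\,d\sigma(s',t')}{2\pi\sqrt{\det(\Sigma_{s,t,s',t'}+\eps I)}},
\]
where $\Sigma_{s,t,s',t'}$ is the $2\times 2$ covariance of $(B_s-B_t,B_{s'}-B_{t'})$, computed from $\mathbb E[B_uB_v]=\min(u,v)$.

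The main obstacle is the Gaussian determinant bound $\det\Sigma_{s,t,s',t'}\ge c_\delta\sqrt{(s-s')^2+(t-t')^2}$. This is proved by case analysis on the relative linear order of $s,s'\in E_1$ and $t,t'\in E_2$: in each case $\det\Sigma$ factors as a sum of nonnegative monomials in the increments, always containing at least one factor of the form $|s-t|$, $|s'-t'|$, $|s-t'|$, or $|s'-t|$, each $\ge\delta$, with the remaining factor dominating $|s-s'|+|t-t'|\ge\sqrt{(s-s')^2+(t-t')^2}$. Combined with the monotonicity bound $\det(\Sigma+\eps I)\ge\det\Sigma$, this yields the uniform-in-$\eps$ estimate $\mathbb E[J_\eps^2]\le C\cdot I_{1/2}(\sigma)<\infty$, after which Paley-Zygmund gives $\P(J_\eps\ge c_1/2)\ge c>0$ for all small $\eps$.

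Finally, to pass from ``near-intersections'' to actual intersections: on the event $\{B[E_1]\cap B[E_2]=\nil\}$, compactness and continuity of $B$ produce $d>0$ with $|B_s-B_t|\ge d$ for all $(s,t)\in E_1\times E_2$, forcing $J_\eps\le (2\pi\eps)^{-1/2}e^{-d^2/(2\eps)}\to 0$. Hence $\{J_{1/n}\ge c_1/2 \text{ i.o.}\}\subseteq\{B[E_1]\cap B[E_2]\neq\nil\}$, and reverse Fatou applied to the indicators $\mathbf{1}_{\{J_{1/n}\ge c_1/2\}}$ promotes the uniform-in-$n$ lower bound $c$ to $\P\{B[E_1]\cap B[E_2]\neq\nil\}\ge c>0$.
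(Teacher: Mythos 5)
Your argument is correct, but note that the paper does not prove this statement at all: it is imported verbatim as Theorem 16.5 of Kahane's book, so there is no internal proof to compare against. What you have written is a self-contained version of the standard second-moment (energy) proof, and all the essential pieces are in place: the reduction to $\delta$-separated compacta via countable stability, Frostman's lemma giving a measure of finite $\tfrac12$-energy, the exact first- and second-moment formulas $\mathbb E[p_\eps(X)p_\eps(Y)]=\bigl(2\pi\sqrt{\det(\Sigma+\eps I)}\bigr)^{-1}$, Paley--Zygmund, and the compactness argument showing $J_\eps\to 0$ on the non-intersection event together with reverse Fatou. The one place where your write-up is looser than it should be is the determinant estimate: it is not true in every ordering of $s,s',t,t'$ that $\det\Sigma$ contains a factor dominating $|s-s'|+|t-t'|$. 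For instance, in the interleaved case $s<t<s'<t'$ the increments $B_s-B_t$ and $B_{s'}-B_{t'}$ are independent and $\det\Sigma=(t-s)(t'-s')\ge\delta^2$, which controls $|s-s'|+|t-t'|$ only after dividing by the diameter of the support; in the nested case $s<s'<t'<t$ one gets $\det\Sigma=(t'-s')\bigl((s'-s)+(t-t')\bigr)\ge\delta\,(|s-s'|+|t-t'|)$ as you describe. Both cases yield $\det\Sigma\ge c_{\delta,D}\sqrt{(s-s')^2+(t-t')^2}$ with a constant depending on the separation $\delta$ and the diameter $D$, so the conclusion stands; you should simply state the case analysis carefully enough to cover the independent-increment orderings. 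With that repair your proposal is a complete and essentially standard proof of the cited result, which is arguably a service to the reader since the paper leaves it as a black box.
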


Theorem \ref{0} cannot be strengthened to probability one; for a counterexample we can take $E_2=\{0\}$ and $E_1=[a,b]$, where $0<a<b$.

\begin{df}\label{D}
For $\alpha\in [0,1]$ and $R\in 2^\N$, let  $D^R_\alpha=\{x: \forall c \exists n\, K^R(x\restrict n)<\alpha n-c\}$, where $K^R$ denotes prefix-free Kolmogorov complexity relative to $R$ (see \cite{LV}). Let $D_\alpha=D^\nil_\alpha$ and $D=D_{3/4}$. The $\alpha$-dimensional Hausdorff measure is denoted by $\mc H^\alpha$.
\end{df}

\begin{lem}\label{bordtennis}
For any $\mf X\subseteq 2^\N$, $\mc H^\alpha(\mf X)=0$ iff $\exists R$, $\mf X\subseteq D^R_\alpha$.
\end{lem}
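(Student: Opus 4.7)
My plan is to prove the two directions separately, using the Kraft inequality for the universal prefix-free $R$-machine on one side and an effectivization of covers via Shannon-Fano coding on the other.

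For the easier direction ($\Leftarrow$), suppose $\mathfrak X\subseteq D^R_\alpha$. For each $c,N\in\N$, consider
$$\mc A_{c,N}=\{\sigma\in\{0,1\}^*:|\sigma|\ge N,\ K^R(\sigma)<\alpha|\sigma|-c\}.$$
For every $x\in D^R_\alpha$ one has $K^R(x\restrict n)<\alpha n-c$ for infinitely many $n$ (since $K^R$ is bounded on short strings, the witnesses $n$ must be arbitrarily large), so $\{[\sigma]:\sigma\in\mc A_{c,N}\}$ covers $D^R_\alpha$ by cylinders of diameter at most $2^{-N}$. The definition of $\mc A_{c,N}$ rewrites as $2^{-\alpha|\sigma|}<2^{-c}\cdot 2^{-K^R(\sigma)}$, so by Kraft's inequality for the prefix-free machine defining $K^R$,
$$\sum_{\sigma\in\mc A_{c,N}}2^{-\alpha|\sigma|}<2^{-c}\sum_{\sigma}2^{-K^R(\sigma)}\le 2^{-c}.$$
Letting $N\to\infty$ yields $\mc H^\alpha(\mf X)\le\mc H^\alpha(D^R_\alpha)\le 2^{-c}$, and then $c\to\infty$ gives $\mc H^\alpha(\mf X)=0$.

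For the harder direction ($\Rightarrow$), suppose $\mc H^\alpha(\mf X)=0$. Then for every $c\in\N$ there is a countable cover $\mc C_c$ of $\mf X$ by basic cylinders with $S_c:=\sum_{\sigma\in\mc C_c}2^{-\alpha|\sigma|}<2^{-c}$. Let $R$ be an oracle coding the uniform sequence $(\mc C_c)_{c\in\N}$. The plan is to build a prefix-free machine $M^R$ that, on input a prefix-free code for $c$ (of length $O(\log c)$) followed by a Shannon-Fano codeword for $\sigma\in\mc C_c$ relative to the normalized weights $\sigma\mapsto 2^{-\alpha|\sigma|}/S_c$, outputs $\sigma$. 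Shannon-Fano yields a codeword of length at most $\lceil \alpha|\sigma|+\log S_c\rceil+1\le \alpha|\sigma|-c+2$, so by universality
$$K^R(\sigma)\le \alpha|\sigma|-c+O(\log c)\qquad(\sigma\in\mc C_c).$$
Now for any $x\in\mf X$ and any target $c$, pick $c_0$ with $c_0-O(\log c_0)\ge c$ and a $\sigma\in\mc C_{c_0}$ with $x\in[\sigma]$; then $\sigma=x\restrict n$ for $n=|\sigma|$, so $K^R(x\restrict n)<\alpha n-c$. This shows $x\in D^R_\alpha$.

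The main obstacle is the $(\Rightarrow)$ direction, and within it the coding bookkeeping: I must make sure that a single oracle $R$ works for all $c$ simultaneously, and that the per-element descriptions are short enough — i.e.\ close to $\alpha|\sigma|$ rather than the naive $|\sigma|$. The key trick is to use weights $2^{-\alpha|\sigma|}$ (not uniform weights) in the Shannon-Fano step, so that the code length automatically tracks $\alpha|\sigma|$, while the cost $O(\log c)$ of specifying the level is absorbed by passing from $c$ to $c_0$. Everything else — the $R$-computability of listing $\mc C_c$ from the oracle, and the reduction from arbitrary covers to cylinder covers — is standard.
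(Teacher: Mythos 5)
Your argument is correct, but it takes a genuinely different route from the paper. The paper's proof is a two-line reduction: it quotes Theorem 1.14 of Reimann's thesis, which already states the oracle characterization of $\mc H^h$-nullity for a general gauge function $h$ in terms of $K^R$, then specializes to $h(t)=t^\alpha$ and absorbs the $\log c$ into $c$. You instead reprove that characterization from scratch in the case $h(t)=t^\alpha$: the $(\Leftarrow)$ direction via Kraft's inequality applied to the level sets $\mc A_{c,N}$ (your observation that the witnesses $n$ can be forced above any $N$ by increasing $c$ is exactly the point needed to get $\delta$-covers for every $\delta$), and the $(\Rightarrow)$ direction by coding the covers into $R$ and building a prefix-free $R$-machine whose description lengths track $\alpha|\sigma|$. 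What your approach buys is self-containment and an explicit view of where the oracle $R$ comes from; what the paper's citation buys is brevity and generality in $h$. One point you should tighten: since each $\mc C_c$ is a countably infinite set presented by the oracle rather than a finite distribution, the ``Shannon--Fano codeword'' step should really be phrased as an application of the Kraft--Chaitin machine existence theorem to the $R$-enumerable request set $\{(\lceil\alpha|\sigma|\rceil-c+2+|\mathrm{code}(c)|,\sigma):\sigma\in\mc C_c,\ c\in\N\}$, whose total weight your estimate $S_c<2^{-c}$ shows is finite; as written, ``Shannon--Fano'' presumes the distribution is known in advance. This is a presentational gap, not a mathematical one.
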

\begin{proof}
Let $\mf X\subseteq 2^\N$ and let $\mc H^h(\mf X)$ be the $h$-dimensional Hausdorff measure of $\mf X$, where $h:\mathbb R\to\mathbb R$. By Theorem 1.14 of \cite{Reimann}, $\mc H^h(\mf X)=0$ iff $\exists R\in 2^\N\,\,\forall A\in\mf X\,\,\forall c\in\N\,\,\exists n$ 
$$2^{-K^R(A\restrict n)}\ge c h(2^{-n}),$$ i.e. 
$$K^R(A\restrict n)\le -\log h(2^{-n})-\log c.$$

We can replace $\log c$ by $c$. Hence taking $h(t)=t^\alpha$, this says 
$$\exists R\in 2^\N\,\,\forall A\in\mf X\,\,\forall c\in\N\,\,\exists n\,\, 
K^R(A\restrict n)\le \alpha n-c,$$
or
$$\exists R\in 2^\N\,\,\mf X\subseteq D^R_\alpha.$$
\end{proof}

\begin{lem}\label{kingston}
$\alpha\le\beta\Iff \exists R\,\,D_\alpha\subseteq D^R_\beta$.
\end{lem}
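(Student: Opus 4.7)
The plan is to treat the two directions separately.

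$(\Rightarrow)$ is immediate on taking $R=\nil$: since $\alpha\le\beta$, the condition $K(x\restrict n)<\alpha n-c$ implies $K(x\restrict n)<\beta n-c$, so $D_\alpha\subseteq D_\beta=D^\nil_\beta$.

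For $(\Leftarrow)$ I would argue the contrapositive: if $\alpha>\beta$ then $D_\alpha\not\subseteq D^R_\beta$ for every $R$. Lemma~\ref{bordtennis}, applied with the role of $\alpha$ played by $\beta$ and with $\mf X=D_\alpha$, rephrases this exactly as $\mc H^\beta(D_\alpha)>0$. So the task reduces to exhibiting a subset of $D_\alpha$ of classical Hausdorff dimension strictly greater than $\beta$.

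For this, pick a rational $\gamma\in(\beta,\alpha)$ and a computable schedule of block-sizes $(a_i,k_i)_{i\ge 1}$ such that, for every $y\in 2^\N$, the interleaving
$$x(y)=y\restrict [1,a_1]\cdot 0^{k_1}\cdot y\restrict [a_1+1,a_1+a_2]\cdot 0^{k_2}\cdots$$
has exactly density $\gamma$ of $y$-bits at the end of each zero block. At those positions $n$, computability of the schedule yields
$$K(x(y)\restrict n)\le K(y\restrict \gamma n)+O(1)\le \gamma n+O(\log n),$$
which lies below $\alpha n-c$ for large $n$ (any fixed $c$), because $\alpha>\gamma$. Hence $x(y)\in D_\alpha$ for \emph{every} $y\in 2^\N$, and so $\{x(y):y\in 2^\N\}\subseteq D_\alpha$.

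The principal obstacle is the Hausdorff dimension of this image. Because $y$ and $y'$ first differing at bit $m$ produce $x(y)$ and $x(y')$ that first differ at position $\sim m/\gamma$, the map $y\mapsto x(y)$ is bi-Hölder of exponent $1/\gamma$, and hence rescales Hausdorff dimension by the factor $\gamma$. Applied to $2^\N$, whose Hausdorff dimension is $1$, the image has Hausdorff dimension $\gamma>\beta$, so $\mc H^\beta(D_\alpha)=\infty>0$, completing the contrapositive.
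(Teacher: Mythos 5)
Your proof is correct, but it takes a genuinely different route from the paper's. The paper proves the contrapositive by exhibiting a single explicit witness: given $R$, it takes an $R$-random $A$, dilutes it with zeros on a set of density $1-\beta$ to get $B$, and shows $B\in D_\alpha\setminus D^R_\beta$ via a relativized Kolmogorov-complexity lower bound $K^R(B\restrict n)\ge^+\beta n$ (the ``chop off the zeroes'' argument). You instead push the whole problem through Lemma~\ref{bordtennis} into classical Hausdorff measure, and then exhibit a subset of $D_\alpha$ of dimension $\gamma\in(\beta,\alpha)$ --- a diluted copy of $2^\N$, essentially the set $[T_Z]$ that the paper itself studies later in Lemma~\ref{stronger} --- whose dimension you compute by a bi-H\"older argument rather than by the potential-theoretic method. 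Both proofs rest on the same dilution idea; yours trades the complexity lower bound for a dimension computation, handles \emph{all} reals $\beta<\alpha$ uniformly (the paper's proof is written only for rational $\beta$, though that case suffices by monotonicity of $D^R_\beta$ in $\beta$), and in effect directly establishes the downstream fact $\dim D_\alpha\ge\alpha$ used in Lemma~\ref{1}(a). One small point to make explicit: for the bi-H\"older constants to be uniform you want the zero blocks $k_i$ bounded, which is automatic if you take the periodic schedule $Z=\{n: n\bmod q<p\}$ with $p/q=\gamma$; with unbounded blocks you would only get H\"older exponent $1/\gamma-\eps$ for the lower bound, which still suffices but needs a sentence.
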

\begin{proof}
If $\alpha\le\beta$ then we can take $R=\nil$ and the inclusion is trivial.
If $\beta<\alpha$, $\beta\in\mathbb Q$, then this is not the case. Indeed, consider a join $B=A\oplus_Z\nil$ where $A$ is $R$-random and $Z$ is chosen to have density equal to the rational number $1-\beta=p/q$. That is $Z=\{n: n\mod q<p,\,\, n\in\N\}$. Here $A\oplus_Z B$ is such that the bits in $Z$ look like $B$, the others like $A$. For example, the usual $A\oplus B$ from computability theory is $A\oplus_{\{2n+1:n\in\N\}}B$. Then $B\in D_\alpha\backslash D^R_\beta$. Namely, $K^R(A\restrict n)\ge^+ n$ and so $K^R(B\restrict n)\ge^+\beta n$, or else we could describe $A\restrict n$ by describing $B\restrict (n/\beta)$ and then chopping off zeroes, giving $\exists d\forall c\exists n$, $K^R(A\restrict n)\le K^R(B\restrict n/\beta)+d\le \beta(n/\beta)-c+d$ .
\end{proof}

The \emph{effective Hausdorff dimension} of a given $x\in 2^\mathbb N$ is a notion that we need only indirectly. For completeness, we point out that it is defined to be the supremum of those $s\in [0,1]$ such that $x$ belongs to no set $\inter_n U_n$, where each $U_n$ is a $\Sigma^0_1$ class, uniformly in $n$, and $U_n=\bigcup_{p\in\N} [\sigma_{n,p}]$, $\sum_p 2^{-|\sigma| s}\le 2^{-n}$.

\begin{lem}\label{1}
\begin{enumerate}
\item[(a)] $\dim D=3/4$.
\item[(b)] If $x\in D$ then $x$ has effective Hausdorff dimension $\le 3/4$.
\end{enumerate}
\end{lem}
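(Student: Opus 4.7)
\textbf{Proof plan for Lemma \ref{1}.}

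For part (a), the upper bound $\dim D \le 3/4$ drops out immediately from Lemma \ref{bordtennis}: applied with $\mf X = D$, $\alpha = 3/4$, and $R = \nil$, the tautological inclusion $D = D^\nil_{3/4}$ gives $\mc H^{3/4}(D) = 0$. For the matching lower bound I would argue by contradiction. If $\dim D < 3/4$, pick any rational $\beta$ with $\dim D < \beta < 3/4$; then $\mc H^\beta(D) = 0$, so by Lemma \ref{bordtennis} some oracle $R$ would satisfy $D_{3/4} = D \subseteq D^R_\beta$. But Lemma \ref{kingston}, whose negative direction is established precisely for rational $\beta$, rules out any such $R$ when $\beta < 3/4$. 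This contradiction forces $\dim D \ge 3/4$.

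For part (b) the task is to transfer the Kolmogorov-complexity condition defining $D$ to the $\Sigma^0_1$-test definition of effective Hausdorff dimension recalled just above the lemma. Fix a rational $s > 3/4$ and set
$$U_n = \bigcup\bigl\{[\sigma] : K(\sigma) < \frac{3}{4}|\sigma| - n\bigr\}.$$
This is uniformly $\Sigma^0_1$ in $n$. The usual counting bound (at most $2^k$ strings have $K \le k$) shows that at most $2^{3\ell/4 - n}$ strings of length $\ell$ are eligible, so the $s$-weight of $U_n$ is at most
$$\sum_{\ell} 2^{3\ell/4 - n} \cdot 2^{-\ell s} \; = \; 2^{-n} \sum_{\ell} 2^{-\ell(s - 3/4)} \; = \; C_s \cdot 2^{-n},$$
a convergent geometric sum because $s - 3/4 > 0$. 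Reindexing $n \mapsto n + \lceil \log_2 C_s \rceil$ turns this into a genuine $s$-test of the form required by the paper's definition. Since $x \in D$ means that for every $c$ some prefix $x\restrict n$ satisfies $K(x\restrict n) < 3n/4 - c$, we have $x \in U_c$ for every $c$; hence $x$ lies in $\bigcap_c U_c$, and its effective Hausdorff dimension is at most $s$. Letting $s$ range over rationals decreasing to $3/4$ completes (b).

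I do not expect any serious obstacle: part (a) is essentially one line on top of Lemmas \ref{bordtennis} and \ref{kingston}, and the only delicate point in (b) is the routine normalization needed to fit the weight $C_s \cdot 2^{-n}$ into the paper's requirement $\le 2^{-n}$, which is handled by a constant shift in the level index.
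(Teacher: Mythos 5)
Your proof of part (a) is exactly the paper's: the upper bound is Lemma \ref{bordtennis} with $R=\nil$, and the lower bound combines Lemmas \ref{bordtennis} and \ref{kingston} by contradiction, just as the authors intend by writing ``by Lemmas \ref{bordtennis} and \ref{kingston}, $\dim D_\alpha\ge\alpha$.'' For part (b) you diverge: the paper simply cites Theorem 2.6 of Reimann's thesis (in effect the characterization of effective Hausdorff dimension as $\liminf_n K(x\restrict n)/n$, of which the condition $\forall c\,\exists n\, K(x\restrict n)<\tfrac34 n - c$ immediately gives the bound $\le 3/4$), whereas you reprove the needed direction from scratch by building the $\Sigma^0_1$ test $U_n=\bigcup\{[\sigma]: K(\sigma)<\tfrac34|\sigma|-n\}$, bounding its $s$-weight by the counting estimate on strings of low complexity, and normalizing by a constant shift of the index. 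Your construction is correct (the set $\{\sigma: K(\sigma)<\tfrac34|\sigma|-n\}$ is uniformly c.e., the geometric sum converges precisely because $s>3/4$, membership of every $x\in D$ in every level follows from the definition of $D$, and monotonicity of $s$-tests in $s$ lets you conclude by taking rational $s\downarrow 3/4$); what it buys is self-containedness relative to the paper's own definition of effective dimension, at the cost of essentially re-deriving a special case of the cited theorem. Both arguments are sound.
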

\begin{proof}
(a) By Lemma \ref{bordtennis}, $\mc H^\alpha(D_\alpha)=0$ and hence $\dim D_\alpha\le \alpha$, and by Lemmas \ref{bordtennis} and \ref{kingston}, $\dim D_\alpha\ge\alpha$. Part (b) follows from Theorem 2.6 of \cite{Reimann}.
\end{proof}

To show that $D$ has suitable closed subsets of large dimension, we describe and then use the \emph{potential theoretic method} \cite{Frostman35}\cite{MP}. We include some proofs from \cite{MP} for completeness, and to bring these ideas closer to a computability theoretical audience.

A measure $\mu$ on the Borel sets of a metric space $E$ is called a \emph{mass distribution} if $0<\mu(E)<\infty$.
Let the ultrametric $\upsilon$ be defined by $\upsilon(x,y)=2^{-\min\{n:x(n)\ne y(n)\}}$. 

\begin{df}
Suppose $\mu$ is a mass distribution on a metric space $(E,\rho)$ and $\alpha\ge 0$. The $\alpha$-potential of a point $x\in E$ with respect to $\mu$ is defined as
$$\phi_\alpha(x)=\int\frac{d\mu(y)}{\rho(x,y)^\alpha}.$$
The $\alpha$-energy of $\mu$ is 
$$I_\alpha(\mu)=\int\phi_\alpha(x)d\mu(x)=\iint \frac{d\mu(x)d\mu(y)}{\rho(x,y)^\alpha}.$$
\end{df}

Suppose $\mu$ is a mass distribution on $2^\N$, and suppose $\alpha\ge 0$. Then, for every $x\in 2^\N$, let $\mc B(x,r)$ be the closed ball centered in $x$ of radius $r$ and define the value $$\overline d_\alpha(\mu,x)=\limsup_{r\downarrow 0}\frac{\mu(\mc B(x,r))}{r^\alpha},$$
the upper $\alpha$-density of $\mu$ at $x$.

\begin{pro}[Local mass distribution principle]
If $\mu$ is a mass distribution on $2^\N$, and $A\subseteq 2^\N$ is a Borel set with $$\overline d_\alpha(\mu,x)<C\text{ for all }x\in A,$$ then $\mc H^\alpha(A)\ge\frac{\mu(A)}{C}$, and, in particular, if $\mu(A)>0$ then $\dim A\ge\alpha$.
\end{pro}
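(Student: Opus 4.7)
The plan is a classical Frostman-style covering argument: turn the pointwise upper-density bound into a ball-by-ball estimate on $\mu$, and compare with a Hausdorff $\delta$-cover of $A$ before letting $\delta\downarrow 0$.

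First I would unpack the hypothesis. For each $x\in A$ the condition $\limsup_{r\downarrow 0}\mu(\mc B(x,r))/r^\alpha<C$ supplies some $r_x>0$ such that $\mu(\mc B(x,r))\le C\,r^\alpha$ for every $0<r\le r_x$. The radii $r_x$ need not be bounded below on $A$, which is the one genuine nuisance. I would sidestep this by stratifying $A$ as $A_n=\{x\in A:r_x\ge 1/n\}$. In the ultrametric space $2^\N$ closed balls are cylinders $[\sigma]$ and the map $x\mapsto\mu([x\restrict k])$ is continuous for each $k$, so each $A_n$ is Borel and $\mu(A_n)\nearrow\mu(A)$ by continuity of $\mu$ from below.

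Next, fix $n$ and let $\delta<1/n$. Given any $\delta$-cover $\{U_i\}$ of $A_n$, whenever $U_i\cap A_n\ne\nil$ choose $x_i\in U_i\cap A_n$; then $U_i\subseteq\mc B(x_i,|U_i|)$ and $|U_i|\le\delta<1/n\le r_{x_i}$, so the local bound gives
$$\mu(U_i\cap A_n)\le\mu(\mc B(x_i,|U_i|))\le C\,|U_i|^\alpha.$$
Summing over $i$ and using countable subadditivity yields $\mu(A_n)\le C\sum_i|U_i|^\alpha$; taking the infimum over $\delta$-covers of $A_n$ gives $\mu(A_n)\le C\,\mc H^\alpha_\delta(A_n)\le C\,\mc H^\alpha(A)$. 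Letting $n\to\infty$ yields $\mu(A)\le C\,\mc H^\alpha(A)$, which is the stated inequality $\mc H^\alpha(A)\ge\mu(A)/C$.

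The ``in particular'' is then immediate: if $\mu(A)>0$ then $\mc H^\alpha(A)>0$, and hence $\dim A\ge\alpha$ by the definition of Hausdorff dimension as $\inf\{s:\mc H^s(A)=0\}$. The step I expect to be the main obstacle is precisely the non-uniformity of the radii $r_x$ in $x\in A$; the ultrametric structure of $2^\N$ makes the Borel stratification $A_n$ very clean (balls are cylinders and $x\mapsto\mu(\mc B(x,2^{-k}))$ is continuous), and once that is in place the remainder is just additivity and a passage to the limit in $\delta$.
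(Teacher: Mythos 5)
Your proposal is correct and follows essentially the same route as the paper: your sets $A_n=\{x\in A: r_x\ge 1/n\}$ are exactly the paper's stratification $A_\delta$ (with $\delta=1/n$), the ball-by-ball estimate against a $\delta$-cover is the same covering argument, and the passage to the limit via monotonicity of $\mu$ and of $\mc H^\alpha_\delta$ matches the paper's conclusion. The only cosmetic difference is that the paper filters a $\delta$-cover of all of $A$ down to the cylinders satisfying the mass bound (yielding $\mu(A_\delta)\le C\,\mc H^\alpha_\delta(A)$ directly), whereas you cover $A_n$ itself; both are valid.
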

\begin{proof}
We first claim that $x\mapsto \mu(\mc B(x,r))$ is continuous for any $r$. Indeed pick $n$ such that $2^{-(n+1)}\le r< 2^{-n}$. Then $\mc B(x,r)=[x\restrict (n+1)]$ for any $x$. Thus if $[y\restrict n+1]=[x\restrict n+1]$ then $\mu\mc B(x,r)=\mu\mc B(y,r)$.

Now if $\delta>0$, let
$$A_\delta=\left\{x\in A: \forall r\in (0,\delta)\,\,\mu\mc B(x,r)\le Cr^\alpha\right\}.$$
Then $A_\delta$ is a Borel set, in fact closed if $A$ is closed.

We claim that $\mu(A_\delta)\le C\mc H^\alpha_\delta(A)$.

Indeed, letting $\text{diam}$ denote diameter induced by the standard metric on $[0,1]$, $\mc H^\alpha_\delta$ is the infimum of all sums $\sum_{i\in\N}\text{diam}([\sigma_i])^\alpha$ where $A\subseteq \bigcup_{i\in\N} [\sigma_i]$ and each $\text{diam}([\sigma_i])<\delta$.

Let $\sigma'_i$ be the subsequence of the $\sigma_i$ chosen so that $\mu [\sigma_i']\le C \text{diam}([\sigma'_i])^\alpha$. This may no longer cover $A$, but it covers $A_\delta$. Thus $\mu(A_\delta)\le
\sum_{i\in\N}\mu([\sigma'_i])\le \sum_{i\in\N}C\text{diam}([\sigma'_i])^\alpha \le C\sum_{i\in\N}\text{diam}([\sigma_i])^\alpha $. Since this is true for an arbitrary $\delta$-cover of $A$, we are done.

Now $\overline d_\alpha(\mu,x)<C$ for all $x\in A$ which means that
$$\limsup_{r\downarrow 0}\frac{\mu(\mc B(x,r))}{r^\alpha}<C$$
i.e.
$$\exists\eps>0\,\,\exists \delta\,\,\forall r\in (0,\delta)\,\,\frac{\mu(\mc B(x,r))}{r^\alpha}\le C-\eps$$

$$\exists\eps>0\,\,\exists \delta\,\,\forall r\in (0,\delta)\,\,\mu(\mc B(x,r))\le (C-\eps)r^\alpha < Cr^\alpha$$

which implies

$$\exists \delta\,\,\forall r\in (0,\delta)\,\,\mu(\mc B(x,r)) < Cr^\alpha$$
and
$$\exists \delta\,\,\forall r\in (0,\delta)\,\,\mu(\mc B(x,r)) \le Cr^\alpha$$

i.e. $\exists\delta\,\,x\in A_\delta$. So we have shown $A\subseteq \bigcup_{\delta>0}A_\delta$.

Since $\delta\le \delta' \Implies A_\delta\supseteq A_{\delta'}$,
$$\mu(A)\le \mu\bigcup_{\delta>~0}A_\delta = \lim_{\delta\downarrow 0}\mu A_\delta \le\lim_{\delta\downarrow 0} C\mc H^\alpha_\delta(A) = C \mc H^\alpha(A).$$
\end{proof}

\begin{thm}[Potential Theoretic Method]\label{PTM}
Let $\alpha\ge 0$ and let $\mu$ be a mass distribution on a Borel set $E\subseteq 2^\N$ with $I_\alpha(\mu)<\infty$. Then $\mc H^\alpha(E)=\infty$ and hence $\dim E\ge\alpha$.
\end{thm}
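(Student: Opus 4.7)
The plan is to invoke the Local Mass Distribution Principle on a carefully chosen Borel subset of $E$ where we can control the upper $\alpha$-density. First, since $I_\alpha(\mu)=\int\phi_\alpha(x)\,d\mu(x)<\infty$, the potential $\phi_\alpha(x)$ is finite for $\mu$-almost every $x\in E$. In particular the set $E^*=\{x\in E:\phi_\alpha(x)<\infty\}$ satisfies $\mu(E^*)=\mu(E)>0$.

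Next I would show that for every $x\in E^*$, the upper $\alpha$-density vanishes: $\overline d_\alpha(\mu,x)=0$. The key estimate is the trivial inequality
$$\frac{\mu(\mc B(x,r))}{r^\alpha}=\int_{\mc B(x,r)}\frac{d\mu(y)}{r^\alpha}\le \int_{\mc B(x,r)}\frac{d\mu(y)}{\rho(x,y)^\alpha},$$
valid because $\rho(x,y)\le r$ on $\mc B(x,r)$. The integrand on the right is a fixed $\mu$-integrable function (its total integral over $2^\N$ is $\phi_\alpha(x)<\infty$), so by dominated convergence the right-hand side tends to $0$ as $r\downarrow 0$. Hence $\overline d_\alpha(\mu,x)=0$ for every $x\in E^*$.

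Now for each $C>0$ set $A_C=\{x\in E:\overline d_\alpha(\mu,x)<C\}$. As was noted in the proof of the Local Mass Distribution Principle, $x\mapsto\mu(\mc B(x,r))$ is continuous for each fixed $r$, so $x\mapsto\overline d_\alpha(\mu,x)$ is Borel and $A_C$ is a Borel set. Since $E^*\subseteq A_C$, we have $\mu(A_C)\ge\mu(E)$. Applying the Local Mass Distribution Principle to $A_C$ yields
$$\mc H^\alpha(E)\ge\mc H^\alpha(A_C)\ge \frac{\mu(A_C)}{C}\ge\frac{\mu(E)}{C}.$$
Letting $C\downarrow 0$ gives $\mc H^\alpha(E)=\infty$, and hence $\dim E\ge\alpha$.

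The only subtle step is the dominated convergence argument at the heart of step two; everything else is a matter of bookkeeping. The key point to verify is that $y\mapsto \rho(x,y)^{-\alpha}$ is in $L^1(\mu)$ precisely when $\phi_\alpha(x)<\infty$, so that integrating it over the shrinking balls $\mc B(x,r)$ produces a quantity converging to zero. This is exactly why the finiteness of $I_\alpha(\mu)$, via Fubini forcing $\phi_\alpha$ to be $\mu$-a.e. finite, is enough to collapse $\overline d_\alpha$ all the way to $0$ on a set of full $\mu$-measure and thereby unlock the Local Mass Distribution Principle for arbitrarily small constants $C$.
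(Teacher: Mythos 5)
Your proof is correct, and the heart of it differs from the paper's. Both arguments reduce to showing that $\overline d_\alpha(\mu,x)$ is small for $\mu$-almost every $x$ and then invoking the Local Mass Distribution Principle with an arbitrarily small constant $C$; the difference is in how that density estimate is obtained. The paper argues the contrapositive: on the set $E_1$ where $\overline d_\alpha(\mu,x)>0$ it constructs a sequence of pairwise disjoint annuli $\mc B(x,r_i)\setminus\mc B(x,q_i)$, each carrying mass at least $\eps r_i^\alpha/4$, so that $\phi_\alpha(x)=\infty$ on $E_1$ and hence $\mu(E_1)=0$ by the finiteness of $I_\alpha(\mu)$. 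You instead go directly from $\phi_\alpha(x)<\infty$ to $\overline d_\alpha(\mu,x)=0$ via the pointwise bound $\mu(\mc B(x,r))/r^\alpha\le\int_{\mc B(x,r)}\rho(x,y)^{-\alpha}\,d\mu(y)$ and absolute continuity of the integral of the $L^1$ function $\rho(x,\cdot)^{-\alpha}$ over the shrinking balls. Your route is shorter and avoids the combinatorial annulus construction entirely; what the paper's version buys is that it makes no appeal to dominated convergence and exhibits the divergence of $\phi_\alpha$ explicitly, which is the form of the argument in M\"orters--Peres that the authors say they are reproducing. Two small points worth noting: your dominated convergence step tacitly uses $\mu\{x\}=0$ (so that $\int_{\{x\}}\rho(x,y)^{-\alpha}\,d\mu(y)=0$), which for $\alpha>0$ is indeed forced by $\phi_\alpha(x)<\infty$, exactly as the paper notes at the outset; and both your argument and the paper's silently require $\alpha>0$, since for $\alpha=0$ the potential carries no information and the stated conclusion can fail for a point mass.
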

\begin{proof}
Note that since $I_\alpha(\mu)<\infty$, we have $\mu\{x\}=0$ for all $x\in E$. Let
$$E_1=\left\{x\in E: \overline d_\alpha(\mu,x)>0\right\}$$
 $$=\left\{x\in E: \limsup_{r\downarrow 0}\frac{\mu(\mc B(x,r))}{r^\alpha} >0\right\}$$
$$=\left\{ x\in E: \exists\eps>0\,\,\forall\eta\in (0,\eps)\forall\delta>0\exists r\in (0,\delta) \frac{\mu(\mc B(x,r))}{r^\alpha} \ge \eps-\eta\right\}.$$
This is the intersection of $E$ with a $\mathbf\Sigma^0_4$ set, hence Borel. By taking $\eta=\eps/2$ and then replacing $\eps/2$ by $\eps$, we see that
$$E_1\subseteq \left\{ x\in E: \exists\eps>0\,\,\forall\delta>0\exists r\in (0,\delta) \frac{\mu(\mc B(x,r))}{r^\alpha} \ge \eps\right\},$$
in fact these sets are equal. Thus by ``Skolemizing'',
$$E_1= \left\{ x\in E: \exists\eps>0\,\,\exists \{r_i\downarrow 0\}_{i\in\N}\,\, \frac{\mu(\mc B(x,r_i))}{r_i^\alpha} \ge \eps\right\}.$$

Now $\mu\{x\}=0$ and $\{x\}=\inter_{n\in\N} \mc B(x,2^{-n})$, so $\mu\mc B(x,2^{-n})\downarrow 0$. Hence a sufficiently much smaller ball around $x$ will have at most $3/4$ of a larger one's $\mu$-measure. In other words, there exist $0<q_i<r_i$, $B_i:=\mc B(x,r_i)\backslash\mc B(x,q_i)$, with $\mu B_i\ge \mu\mc B(x,r_i)/4 \ge \eps r_i^\alpha/4$.

We can arrange that $r_{i+1}<q_i$ by alternately choosing $r_i$, $q_i$, $r_{i+1}$, $q_{i+1}$. The annulus $B_i$ corresponds to the interval $(q_i,r_i]$ and hence the annuli are then pairwise disjoint. If $y\in B_i$ then $\upsilon(x,y)\le r_i$ so $\frac{1}{\upsilon(x,y)^\alpha}\ge r_i^{-\alpha}$. So we have

$$\phi_\alpha(x)=\int\frac{d\mu(y)}{\upsilon(x,y)^\alpha} \ge \sum_{i=1}^\infty \int_{B_i}\frac{d\mu(y)}{\upsilon(x,y)^\alpha} \ge \frac{\eps}{4}\sum_{i=1}^\infty r_i^\alpha r_i^{-\alpha} = \infty$$
whenever $x\in E_1$. But by assumption $I_\alpha(\mu)=\int\phi_\alpha(x)d\mu(x)<\infty$, so the only possibility is that $\mu(E_1)=0$. On the other hand, if $x\in E\backslash E_1$ then $\overline d_\alpha(\mu,x)=0$ and so $\forall C>0$, $\overline d_\alpha(\mu,x)<C$ which means that the Local Mass Distribution Principle applies. Since $E\supseteq E\backslash E_1$,

$$\mc H^\alpha(E)\ge \mc H^\alpha(E\backslash E_1)\ge C^{-1}\mu(E\backslash E_1)=C^{-1}\mu(E)$$
which means $\mc H^\alpha(E)=\infty$.

\end{proof}

Given any set $Z\subseteq\N$ we can form the tree $$T_Z=\{\sigma: (\forall n<|\sigma|)(Z(n)=0\to\sigma(n)=0\}.$$ 
For example, $T_{\N}=\{0,1\}^*$ (the set of all finite binary strings) and $T_\nil = \{0^n: n\in\N\}$.

\begin{lem}\label{stronger}
Suppose given a real number $\gamma\in (0,1)$, and $\eps>0$ such that $\gamma+\eps\in\mathbb Q$; say $\gamma+\eps=p/q$, $p,q\in\N$. If $A=[T_Z]$ with $Z=\{n: n\mod q<p\}$, then there is a probability measure $\mu$ on $A$ such that $I_\gamma(\mu)<\infty$.
\end{lem}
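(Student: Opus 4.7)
The natural candidate for $\mu$ is the ``forced'' Bernoulli measure on $A$: each coordinate in $Z$ is an independent fair coin flip, and each coordinate not in $Z$ is fixed to $0$. Equivalently, for any string $\sigma\in\{0,1\}^n$ extendible to a branch of $T_Z$ (i.e.\ $\sigma(i)=0$ for every $i<n$ with $i\nin Z$),
$$\mu([\sigma])=2^{-Z_n},\qquad Z_n:=|Z\inter [0,n)|,$$
and $\mu([\sigma])=0$ otherwise. Since $Z=\{n:n\mod q<p\}$ has exact density $p/q=\gamma+\eps$, a direct count gives $(\gamma+\eps)n-q\le Z_n\le (\gamma+\eps)n+q$, so there is a constant $c_0>0$ with $2^{-Z_n}\le c_0\cdot 2^{-(\gamma+\eps)n}$ for all $n$.

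The plan is to bound the $\gamma$-potential $\phi_\gamma(x)$ uniformly in $x\in A$, which will immediately give $I_\gamma(\mu)\le \mu(A)\cdot\sup_x\phi_\gamma(x)<\infty$. Fix $x\in A$ and decompose the integral according to the position $n$ of first disagreement between $x$ and $y$. If $n\nin Z$ then $y$ must agree with $x$ at coordinate $n$ (both forced to $0$), so this contributes nothing. If $n\in Z$, the set of $y$ whose first disagreement with $x$ occurs at coordinate $n$ has $\mu$-measure $2^{-(Z_n+1)}$, and $\upsilon(x,y)=2^{-n}$, so $\upsilon(x,y)^{-\gamma}=2^{\gamma n}$. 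Therefore
$$\phi_\gamma(x)=\sum_{n\in Z} 2^{-(Z_n+1)}\cdot 2^{\gamma n}\le \tfrac{c_0}{2}\sum_{n=0}^\infty 2^{\gamma n-(\gamma+\eps)n}=\tfrac{c_0}{2}\sum_{n=0}^\infty 2^{-\eps n}<\infty.$$

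Since this bound is independent of $x$, integrating against $d\mu(x)$ gives $I_\gamma(\mu)<\infty$, as required. The only potential obstacle is the asymptotic count of $Z_n$, but because $Z$ is exactly periodic with period $q$ and $p$ ``ones'' per period, the error term is controlled by the constant $q$, and the geometric series closes with room to spare (the ``gap'' $\eps>0$ is precisely what guarantees summability).
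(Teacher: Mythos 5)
Your proof is correct and takes essentially the same approach as the paper: both use the natural uniform (branch-splitting) measure on $[T_Z]$ and bound the potential $\phi_\gamma(x)$ uniformly in $x$ by a geometric series whose convergence is driven by the gap $\eps$. If anything, your version is slightly more careful, since the paper idealizes the distances as exactly $2^{-m/(\gamma+\eps)}$ where $m$ counts branchings, whereas you index by the actual coordinate $n$ of first disagreement and control the periodicity error $|Z_n-(\gamma+\eps)n|\le q$ explicitly.
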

\begin{proof}
Let $\mu$ distribute the weight 1 on $A$ in the natural way, i.e. splitting the measure in half at each branching of $T_Z$. 
Fix $x$. Then $\mu\{y: \upsilon(x,y)=2^{-m /(\gamma+\eps)}\}=2^{-m}$. Writing $\alpha=\frac{\eps}{\gamma+\eps}$, 
 $$\int\frac{d\mu(y)}{\upsilon(x,y)^\gamma}=\sum 2^{-m} 2^{m\gamma/(\gamma+\eps)}$$
 $$= \sum 2^{-\left(\frac{\eps}{\gamma+\eps}\right)m}=\frac{1}{1-2^{-\alpha}}=\beta,$$
where $\beta$ is independent of $x$, and hence
$I_\gamma(\mu)=\int \beta d\mu(x) = \beta <\infty$.
\end{proof}

\begin{lem}\label{22}
Fix $1>\eps>0$. $D\inter [\eps,1]$ has a closed subset of dimension $\ge 2/3$.
\end{lem}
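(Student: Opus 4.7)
The plan is to produce $A$ of the form $\tau([T_{Z_0}])$, where $Z_0 \subseteq \N$ has density strictly between $2/3$ and $3/4$ and $\tau$ is a concatenation map that anchors everything inside a small dyadic cone near $1$. Concretely, I would pick $p/q = 7/10$, so that with $\gamma = 2/3$ and $\varepsilon' = 1/30$ we have $\gamma + \varepsilon' = p/q$. Lemma \ref{stronger} then supplies a probability measure $\mu_0$ on $[T_{Z_0}]$, where $Z_0 = \{n : n \bmod q < p\}$, satisfying $I_\gamma(\mu_0) < \infty$. Next, I would pick $N \in \N$ large enough that $1 - 2^{-N} \ge \varepsilon$ and define $\tau : 2^{\N} \to 2^{\N}$ by $\tau(x) = 1^N \cdot x$ (concatenation). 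Put $A = \tau([T_{Z_0}])$. Since $\tau$ is continuous and $[T_{Z_0}]$ is compact, $A$ is closed; and every $y \in A$ begins with $1^N$, so $\iota(y) \in [1 - 2^{-N}, 1] \subseteq [\varepsilon, 1]$.

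To check $A \subseteq D$, I would argue via a Kolmogorov complexity bound. For $x \in [T_{Z_0}]$, the bits of $x$ outside $Z_0$ are pinned to $0$ and $Z_0$ is computable, which gives $K(x \restriction m) \le (p/q)\, m + O(\log m)$. Prepending the fixed prefix $1^N$ preserves the bound: for $y = \tau(x) \in A$ and $n > N$, the string $y \restriction n$ is determined by $x \restriction (n-N)$ together with $N$, so $K(y \restriction n) \le (p/q)\, n + O(\log n)$. Because $p/q = 7/10 < 3/4$, the slack $(3/4 - p/q)\, n = n/20$ eventually dominates any $c + O(\log n)$, so for every $c$ there is an $n$ with $K(y \restriction n) < (3/4)\, n - c$; hence $y \in D$.

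For $\dim A \ge 2/3$, the key observation is that $\tau$ is a similarity for the ultrametric $\upsilon$ with ratio $2^{-N}$: $\upsilon(\tau x,\tau x') = 2^{-N}\,\upsilon(x,x')$. Pushing $\mu_0$ forward by $\tau$ yields a probability measure $\mu$ supported on $A$ with
\[
I_\gamma(\mu) = \iint \upsilon(\tau(x), \tau(x'))^{-\gamma}\, d\mu_0(x)\, d\mu_0(x') = 2^{N\gamma}\, I_\gamma(\mu_0) < \infty,
\]
so the Potential Theoretic Method (Theorem \ref{PTM}) gives $\dim A \ge \gamma = 2/3$. The main subtlety is the Kolmogorov complexity bound: one must confirm that the description of $N$ and of the computable set $Z_0$ can be absorbed into the $O(\log n)$ overhead, which is routine since both are computable and $N$ is fixed once $\varepsilon$ is given.
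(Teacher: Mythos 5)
Your proof is correct and rests on the same two pillars as the paper's: the positive-density tree $[T_Z]$ together with the energy computation of Lemma \ref{stronger}, fed into Theorem \ref{PTM}. The differences are in the two auxiliary steps, and both of your choices are slightly more robust. First, you take the density to be $7/10>2/3$ so that Lemma \ref{stronger} applies verbatim with $\gamma=2/3$, giving $I_{2/3}(\mu_0)<\infty$ in one shot; the paper instead uses density exactly $2/3$, for which the geometric series in the proof of Lemma \ref{stronger} diverges at $\gamma=2/3$, so it must implicitly run the argument for every $\gamma<2/3$ and take a supremum (hence its appeal to ``the proof of'' Lemma \ref{stronger} rather than the lemma itself). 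Second, and more substantively, you localize into $[\eps,1]$ by prepending $1^N$ and observing that concatenation is a $\upsilon$-similarity with ratio $2^{-N}$, so the energy only picks up the harmless factor $2^{N\gamma}$; the paper instead intersects $[T_Z]$ with a cone $[\sigma]$, $\sigma\in T_Z$, contained in $[\eps,1]$. That step of the paper actually cannot be carried out when $\eps$ is close to $1$: with $Z=\{n: n\bmod 3<2\}$ every $\sigma\in T_Z$ has $\iota(\sigma 0^\infty)<6/7$, so no cone $[\sigma]$ with $\sigma\in T_Z$ lies in $[\eps,1]$ once $\eps\ge 6/7$ (for the application in Proposition \ref{3} only small $\eps$ is needed, so nothing downstream is affected). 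Your concatenation trick handles all $\eps\in(0,1)$ uniformly, and your complexity estimate $K(1^N x\restriction n)\le (7/10)n+O(\log n)<(3/4)n-c$ correctly verifies membership in $D$.
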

\begin{proof}
Let $Z=\{n:n\mod 3<2\}$. Then $[T_Z]\subseteq D$, as is easily seen (we can predict every third bit of any path in $[T_Z]$.) By the proof of Lemma \ref{stronger} and by Theorem \ref{PTM}, the dimension of $[T_Z]\inter [\sigma]$ is $\ge 2/3$ whenever $\sigma\in T_Z$; choosing $\sigma\in T_Z$ with $[\sigma]\subseteq [\eps,1]$, we are done.
\end{proof}

\section{Brownian motion}
\begin{lem}\label{bb}
Let $\mc Z=\mc Z(\omega)=\{t:B_t(\omega)=0\}$ be the set of zeroes of a path of Brownian motion $\omega$.
Let $X$ be any set of reals such that $X/2=\{x/2:x\in X\}\subseteq X$. Then
for any $n$, $$\P\{\mc Z\inter [2^{-(n+1)},2^{-n}]\inter X\ne\varnothing\} \ge \P\{\mc Z\inter [2^{-n},2^{-(n-1)}]\inter X\ne\nil\}.$$
\end{lem}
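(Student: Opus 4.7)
The plan is to deduce the inequality from the Brownian scaling property (Proposition \ref{5}) together with the hypothesis $X/2\subseteq X$.

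First I would translate Proposition \ref{5} into a statement about zero sets. If $\tilde B_t=\frac{1}{\sqrt{2}}B_{2t}$, then Proposition \ref{5} (with $a=2$) says $\tilde B$ is a Brownian motion, so its zero set has the same law as $\mc Z$. But the zero set of $\tilde B$ equals $\{t:B_{2t}=0\}=\mc Z/2$. Hence, as random closed sets, $\mc Z/2\stackrel{d}{=}\mc Z$. In particular, for any fixed (deterministic) Borel set $Y\subseteq [0,\infty)$ we have
$$\P\{\mc Z\inter Y\ne\nil\}=\P\{(\mc Z/2)\inter Y\ne\nil\}.$$

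Next I would halve the larger event. Set $E_n=\mc Z\inter [2^{-n},2^{-(n-1)}]\inter X$. Dividing everything by $2$ gives
$$E_n/2=(\mc Z/2)\inter [2^{-(n+1)},2^{-n}]\inter (X/2).$$
Using the hypothesis $X/2\subseteq X$, this is a subset of $(\mc Z/2)\inter [2^{-(n+1)},2^{-n}]\inter X$. Moreover $E_n\ne\nil$ iff $E_n/2\ne\nil$, so
$$\P\{E_n\ne\nil\}\le \P\{(\mc Z/2)\inter [2^{-(n+1)},2^{-n}]\inter X\ne\nil\}.$$

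Finally, apply the distributional identity from the first paragraph with $Y=[2^{-(n+1)},2^{-n}]\inter X$ to rewrite the right-hand side as $\P\{\mc Z\inter [2^{-(n+1)},2^{-n}]\inter X\ne\nil\}$, which gives the desired inequality. There is no serious obstacle; the only thing to be careful about is that $X$ is a deterministic set, so scaling acts only on $\mc Z$, which allows the distributional identity $\mc Z/2\stackrel{d}{=}\mc Z$ to be invoked with $X$ held fixed.
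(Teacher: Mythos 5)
Your proof is correct and takes essentially the same route as the paper's: both rescale time by a factor of $2$ using Proposition \ref{5} and invoke $X/2\subseteq X$ to pass from an event over $X/2$ to the larger event over $X$. The only cosmetic difference is that you package the scaling as a distributional identity of random zero sets, $\mc Z/2\stackrel{d}{=}\mc Z$, whereas the paper carries out the substitution $t=2s$ explicitly inside the event.
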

\begin{proof}
$$\P\{\mc Z\inter [2^{-n},2^{-(n-1)}]\inter X\ne\varnothing\}=$$
$$\P \left\{ \exists t\in \left[2^{-n},2^{-(n-1)}\right]\inter X,\,\, B_t=0\right\}=$$
$$\P \left\{ \exists s\in \left[2^{-(n+1)},2^{-n}\right]\inter \frac{X}{2},\,\, B_{2s}=0\right\}\le$$
$$\P \left\{ \exists s\in \left[2^{-(n+1)},2^{-n}\right]\inter X,\,\, \frac{1}{\sqrt{2}}B_{2s}=0\right\}=^{\text{Prop.\ref{5}}}$$
$$\P \left\{ \exists s\in \left[2^{-(n+1)},2^{-n}\right]\inter X,\,\, B_{s}=0\right\}=$$
$$\P\{\mc Z\inter [2^{-(n+1)},2^{-n}]\inter X\ne\nil\}.$$
\end{proof}

\begin{pro}\label{3}
Almost surely, there are points other than $(0,0)$ of
effective Hausdorff dimension $<1$ on the graph of Brownian motion $G_B=\{(t,B_t):t\ge 0\}$.
\end{pro}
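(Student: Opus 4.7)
The plan is to produce, with probability one, some time $t > 0$ with $B_t = 0$ and $t \in D$. Any such $t$ gives a graph point $(t, B_t) = (t, 0)$ distinct from $(0,0)$; since the constant $0$ in the second coordinate is computable, the effective Hausdorff dimension of $(t,0)$ equals that of $t$, which by Lemma~\ref{1}(b) is at most $3/4 < 1$.

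First, fixing any $\eps \in (0,1)$, I would use Lemma~\ref{22} to pick a closed $E \subseteq D \inter [\eps,1]$ with $\dim E \ge 2/3$. Then Theorem~\ref{0} applies to the disjoint closed pair $E_1 = E$, $E_2 = \{0\}$ in $[0,\infty)$, since $\dim(E_1 \times E_2) = \dim E \ge 2/3 > 1/2$; this gives $\P\{B[E] \inter B[\{0\}] \ne \nil\} > 0$. Because $B_0 = 0$ deterministically, this reduces to $\P\{\mc Z \inter E \ne \nil\} > 0$, so in particular $\P\{\mc Z \inter D \inter [\eps,1] \ne \nil\} > 0$. By finite subadditivity at least one dyadic slab $[2^{-(n_0+1)}, 2^{-n_0}]$ satisfies
\[
c := \P\{\mc Z \inter [2^{-(n_0+1)},2^{-n_0}] \inter D \ne \nil\} > 0.
\]

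Next I would check that $D/2 \subseteq D$: the binary expansion of $x/2$ is that of $x$ prefixed by a $0$, and prefixing a fixed bit changes prefix-free Kolmogorov complexity by only $O(1)$, so the inequality $K(x \restrict n) < (3/4)n - c$ transfers from $x$ to $x/2$. Hence Lemma~\ref{bb} applies with $X = D$ and iterates to give $\P\{\mc Z \inter [2^{-(n+1)},2^{-n}] \inter D \ne \nil\} \ge c$ for every $n \ge n_0$. Let $A' = \{\omega : \forall \delta > 0\ \exists t \in (0,\delta) \inter D,\ B_t(\omega) = 0\}$; it contains each of those events, so $\P(A') \ge c > 0$. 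Since $A'$ depends only on the germ of $\omega$ at time $0$, Blumenthal's $0$-$1$ Law upgrades this to $\P(A') = 1$, and any witness $t$ on $A'$ completes the argument.

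The main obstacle is exactly this upgrade from positive probability to probability one. Theorem~\ref{0} is sharp (the remark immediately after its statement says so, with $E_2 = \{0\}$ serving as the witness), so a separate mechanism is needed; Blumenthal's law is the natural tool, but to invoke it the positive-probability event must be forced into the germ $\sigma$-algebra at $0$. The scaling identity in Lemma~\ref{bb} is precisely the device for doing so, and the only non-routine side calculation it demands is the closure property $D/2 \subseteq D$.
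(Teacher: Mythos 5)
Your proposal follows the paper's proof almost step for step: Lemma~\ref{22} plus Theorem~\ref{0} to get positive probability of a zero in $D$, localization to a dyadic slab, the scaling Lemma~\ref{bb} together with $D/2\subseteq D$ to push that positive probability down to every scale, and Blumenthal's $0$--$1$ law on the resulting germ event. Your extra observations (why $\dim(t,0)=\dim(t)$ when the second coordinate is computable, and why $D/2\subseteq D$ via prefixing a $0$ to the binary expansion, which the paper dismisses as ``clearly'') are correct and harmless elaborations.

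One step is misstated, though the conclusion is right and the repair is exactly what the paper does. You define $A'=\{\omega:\forall\delta>0\ \exists t\in(0,\delta)\inter D,\ B_t(\omega)=0\}$ and assert that $A'$ ``contains each of those events,'' i.e.\ each slab event $\{\mc Z\inter[2^{-(n+1)},2^{-n}]\inter D\ne\nil\}$. That containment is false: a path with a $D$-zero in one particular slab need not have $D$-zeros at all smaller scales. The correct argument is to set $A'_n=\{\mc Z\inter(0,2^{-n})\inter D\ne\nil\}$; each $A'_n$ \emph{contains} a slab event and so has probability at least $c$, the $A'_n$ are decreasing, and $A'=\Inter_n A'_n$, so $\P(A')=\lim_n\P(A'_n)\ge c$ by continuity from above. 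With that one sentence fixed, your argument is the paper's.
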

\begin{proof} Let $E_1$ be a closed subset of $D$ of dimension $\ge 2/3$ as in Lemma \ref{22}, and let $E_2=\{0\}$. Note $E_1\inter E_2=\nil$ and $\dim E_1\times E_2\ge 2/3>1/2$. Hence by  Theorem \ref{0},

$$\P\{B[E_1]\inter B[E_2]\ne\nil\}>0.$$

By definition of Brownian motion, $B_0=0$ almost surely, so $B[E_2]=\{0\}$ and $B[E_1]\inter B[E_2]\ne\nil\Iff \mc Z\inter E_1\ne\nil$.
Since $E_1\subseteq D$ ($D$ as in Definition \ref{D}),
$\P\{\mc Z\inter D\ne\nil\}\ge\P\{\mc Z\inter E_1\ne\nil\}>0$.
By countable additivity there exists $n_0$ such that $\P\{\mc Z\inter D\inter [2^{-n_0},2^{-(n_0-1)}]\ne\nil\}>0$.

Clearly $D/2\subseteq D$. Hence by Lemma \ref{bb}, for any $n\ge n_0-1$ we have
$$0< \P\{\mc Z\inter [2^{-n_0},2^{-(n_0-1)}]\inter D\ne\nil\}$$
$$\le \P\{\mc Z\inter [2^{-(n+1)},2^{-n}]\inter D\ne\varnothing\} $$
$$\le \P\left\{\mc Z\inter \left[0,2^{-n}\right]\inter D\ne\varnothing\right\}$$

Let $C=\{\omega: \mc Z\inter D\inter [0,2^{-n}]\ne\nil, \forall n\ge 1\}$. Then
$$\P(C)=\lim_{n\to\infty} \P\{\mc Z\inter [0,2^{-n}]\inter D\ne\varnothing\}>0.$$

By Blumenthal's 0-1 Law, $\P(C)=1$ and so $\P\{\mc Z\inter D\ne\nil\}=1$.
If $t\in\mc Z\inter D$ then by Lemma \ref{1}(b), $(t,0)\in G_B$ has effective Hausdorff dimension $\le 3/4<1$, and we are done.
\end{proof}

\end{document}